\newtheorem{theorem}{Theorem}
\newtheorem{proof}{Proof}
\newtheorem{remark}{Remark}
\title{Stationary analysis of certain Markov-modulated reflected random walks in the quarter plane}
\author{Ioannis Dimitriou\\ 
Department of Mathematics, 
University of Patras, P.O.~Box 
26500, Patras, Greece\\
E-mail: idimit@math.upatras.gr\\Website: \href{https://thalis.math.upatras.gr/~idimit/}{https://thalis.math.upatras.gr/~idimit/}}
\begin{document}
\maketitle
\begin{abstract}
In this work, we focus on the stationary analysis of a specific class of continuous time Markov-modulated reflected random walks in the quarter plane with applications in the modelling of two-node Markov-modulated queueing networks with coupled queues. The transition rates of the two-dimensional process depend on the state of a finite state Markovian background process. Such a modulation is space homogeneous in the set of inner states of the two-dimensional lattice but may be different in the set of states at its boundaries. To obtain the stationary distribution, we apply the power series approximation method, and the theory of Riemann boundary value problems. We also obtain explicit expressions for the first moments of the stationary distribution under some symmetry assumptions. An application in the modelling of a priority retrial system with coupled orbit queues is also presented. Using a queueing network example, we numerically validated the theoretical findings.\vspace{2mm}\\
\textbf{Keywords:} Markov modulated reflected random walks; Power series approximation; Boundary value problems; Stationary analysis; Networks with Coupled queues.
\end{abstract}

\section{Introduction}
Our primary aim in this work is in methodology for obtaining stationary metrics of a certain Markov-modulated two-dimensional reflected random walks, as a general model describing a two-queue network with a sort of coupling, operating in a random environment. In particular, we are dealing with a Markovian process $\{Z(t);t\geq0\}=\{(X_{1}(t),X_{2}(t),J(t));t\geq0\}$ where the two-dimensional process $\{(X_{1}(t),X_{2}(t));t\geq0\}$ defined on $\mathbb{Z}_{+}^{2}$ is called the level process. The transition rates of the two-dimensional process $\{(X_{1}(t),X_{2}(t));t\geq0\}$ depend on the state of the phase process $\{J(t);t\geq0\}$, which is defined on a finite set $\{0,1,\ldots,N\}$ as well as on the state of the level process. 

More precisely, the increments of the individual processes $\{X_{1}(t)\}$, $\{X_{2}(t)\}$ take values in $\{-1,0,1,\ldots\}$ when $J(t)=0$ and in $\{0,1,\ldots\}$ when $J(t)\in\{1,\ldots,N\}$. The modulation at the state of the process is space homogeneous, except possibly at the boundaries of $\mathbb{Z}_{+}^{2}$. Furthermore, the negative increment of the individual process $\{X_{1}(t)\}$ (resp. $\{X_{2}(t)\}$) is not only affected by the phase processes, but also on the state of $\{X_{2}(t)\}$ (resp. $\{X_{1}(t)\}$).  

A special example described by such a Markov-modulated reflected random walk is a two-node G-queueing network (see e.g. \cite{gel1,gel2,gel3} for an overview on G-networks) with simultaneous arrivals, coupled processors and service interruptions. We assume that three classes of jobs arrive according to independent Poisson processes. Class $P_{i}$, $i=1,2$ is routed to queue $i$, while a job of class $P_{0}$ is placed simultaneously at both queues. Queues with simultaneous arrivals have numerous applications in computer-communication and production systems in which jobs are split among a number of
different processors, communication channels or machines. Note also that the queues in such models are dependent due to the simultaneous arrivals. In general this makes
an exact analysis of the model very hard. 

Each job at queue $i$ requires exponentially distributed service time with rate $\nu_{i}$. The network is not fully reliable, but is subject to several types of service interruptions, i.e., several types of failed modes. Service interruptions may model breakdowns, malfunctions, as well as preventive maintenance etc. The network operates properly, i.e., the service stations provide service (operating mode), for an exponentially distributed time with rate $\theta_{0,j}$, and then, it switches to the failed mode $j$, $j=1,\ldots,N$, i.e., a type-$j$ interruption of the operating mode occurs. The network stays in failed mode $j$ for an exponentially distributed time, and then, switches either to operating mode 0 (with rate $\theta_{j,0}$), or to another failed mode $k$ (with rate $\theta_{j,k}$, $k\neq j$). 

When the network is in a failed mode of either type it cannot provide service. Moreover, the arrival rates of the jobs of either type depend on the type of the mode (either operating or failed). 

The network is equipped with coupled processors. More precisely, the service rate of a queue depends on the state of the other queue, i.e., queues are interacting with each other. Thus, when the network is in operating mode, and both queues are non-empty, queue 1 serves at a rate $w\nu_{1}$, and queue 2 at a rate $(1-w)\nu_{2}$. If only one queue is non-empty it serves at full capacity, i.e., with rate $\nu_{i}$. 

Signal generation at a queue depends also on the state of the other queue. More precisely if both queues are non-empty, signals arrive at queue $1$ (resp. 2) with rate $w\lambda_{1}^{-}$ (resp. $(1-w)\lambda_{2}^{-}$) and with probability $t_{12}$ (resp. $t_{21}$) triggers the instantaneous movement of a job from queue 1 (resp. queue 2) to queue 2 (resp. queue 1), or with probability $t_{10}$ (resp. $t_{20}$) cancels a job from queue 1 (resp. queue 2). Triggering signals serve to improve load balancing, while deleting signals serve as job cancellations in manufacturing or virus attacks in communication systems. Upon receiving service at queue 1 (resp. 2), the job is either routed to queue 2 (resp. 1) with probability $r_{12}$ (resp. $r_{21}$), or leaves the network with probability $1-r_{12}$ (resp. $1-r_{21}$).

Note here that for the model at hand the service rate of a queue depends on the state of the other queue, and that dependence becomes more apparent due to the presence of simultaneous arrivals. Furthermore, the impact of modulation due to service interruptions increases further the high level of dependence in queueing dynamics. 
\subsection{Related work}
Markov-modulated processes are processes that are driven by an underlying Markov process, in the sense that its transition parameters are affected by the state of the underlying process. The interested reader refer to \cite{pa,pra1,pra} for more information. 

Such type of stochastic processes arise in many queueing applications where systems evolve in random environment (i.e., the underlying process), that may model the irregularity of the arrival process (e.g., are rush-hour phenomena), the irregularity of the service process (e.g., servers' breakdowns, servers' vacations, availability of resources etc.) or both. Other applications are found in biology \cite{lef}, in reliability \cite{lef1} etc. The vast literature on this topic shed light on the stationary behaviour of such models. A variety of approaches have been used, but the most widely applied is the matrix-analytic method. We briefly mention \cite{gaver,kim,neuts,nun} (not exhaustive list). However, it is known that they demand computational effort due to the large number of matrix
computations. Alternatively, some authors tried to investigate special cases for which the stationary distributions adopt a simple product form, e.g., \cite{eco1},\cite{zhu1}. Plenty of work on multidimensional level process has been devoted in deriving asymptotic formulas of the stationary distributions and studying the stability conditions \cite{miya1,miya2,miya3,oz,oz1,oz3} (not exhaustive list). We also refer to \cite{fi1}, that dealt with a network of infinite server queues in which the joint probability generating function (pgf) is characterized in terms of a system of partial differential equations; see also \cite{ma3,ma2,ma1} for some recent studies on Markov-modulated queueing systems.

Quite recently, the authors in \cite{devin} studied the approximation of the probability of a large excursion in the
busy cycle of a constrained Markov-modulated random walk in the quarter plane. Moreover, the authors in \cite{walr} recently introduced the power series approximation (PSA) method to provide approximated stationary metrics in a \textit{non-modulated} random walk in the quarter plane describing a slotted time generalized processor sharing system of two queues. There, at the beginning of a slot, if both queues are nonempty, a type 1 (resp. type 2) customer is served with probability $\beta$ (resp. $1-\beta$); see also \cite{vanle2,vanle1}. A first step towards applying PSA method to modulated queues with coupled processors was recently given in \cite{dim,dimm}. Clearly, coupled processor models arise naturally when limited resources are dynamically shared among processors, e.g., \cite{borst0,dimpaptwc,dimpapad,fay1,fay,van}, as well as in assembly lines in manufacturing \cite{antr}.
\subsection{Contribution}
\paragraph{Fundamental contribution} By employing the generating function approach, we come up with a system of functional equations, and we perform two different methods to investigate the stationary behaviour of a class of Markov-modulated two-dimensional queueing models. It is evidently true that Markov-modulation complicates almost every aspect of the stochastic process that describe the queueing model at hand. 

With Markov-modulation, we increase the dimension of the process describing our model, and this aspect have several consequences: The fundamental functional equation, which is the first step of the analysis, has no longer a scalar form, but instead is given in a matrix form \eqref{matrix}. By exploiting the special structure of the process we reduce the problem of solving this matrix equation to the problem of solving a scalar functional equation. However, to accomplish this task we must uniquely solve the system of equations \eqref{e3}. Moreover, the derivation of the stability condition requires some computational effort that it is connected also with the previous task. Additional effort is required in order to prove some other technical issues as presented in Appendices. 

In Section \ref{appl}, we discussed a similar but slightly different setting with application in priority retrial systems with two coupled orbit queues. There, some further technical issues arise, but a similar approach is applied. Note that it is the first time that such a retrial system is analysed in the related literature.

Thus, the overall analysis becomes quite challenging, since the dynamics of the original two-dimensional queueing process (often called the level process) is affected by the changes of a secondary Markovian process, which models the environment in which the original process operates (often called the phase process). In such a case, the phase process affects the evolution of the level process in two ways: $i)$ The rates at which certain transitions in the level process occur depend on the state of the phase process, as well as on the state itself. Thus, a change in the phase might not immediately trigger a transition of the level process,
but changes its dynamics (indirect interaction). $ii)$ A phase change does trigger an immediate transition of the level process (direct interaction).
\begin{enumerate}
\item We extend the class of stochastic processes in which PSA method (see \cite{vanle1,vanle,vanle4,walr}) can be applied, to the case of Markov-modulated reflected random walks in the quarter plane with a sort of coupling (see also \cite{dim,dimm} for some initial work in this direction). We obtain power series expansions of the pgf of the joint stationary distribution for any state of the phase process. A recursive technique to derive their coefficients is also presented; see Section \ref{psa}.
\item We also show how the theory of Riemann boundary value problems \cite{ga,coh,bv} can be applied for such kind of processes; see Section \ref{sec:rbv}.
\item By considering the \textit{symmetry assumption} (see Section \ref{symm}) at phase $J(t)=0$, we obtain explicit expressions for the moments of the stationary distribution of the individual processes $\{X_{j}(t)\}$, $j=1,2$.
\end{enumerate}

The overall conclusion is that both PSA, and the theory of boundary value problems can be adapted to any Markov-modulated two-dimensional processes for which the \textit{matrix} functional equation \eqref{matrix} can reduced to a \textit{scalar} functional equation of the form given in \eqref{fe}.
\paragraph{Applications} With this methodological approach we are able to analytically investigate a variety of Markov-modulated two-node queueing networks with a sort of coupling among queues. Applications of such queueing models arise naturally in systems where limited resources are dynamically shared among processors and where random incidents, such as breakdowns, preventive maintenance, may interrupt the operating mode and affect its input parameters. 

A first example arises in cable TV networks which are upgraded
to enable bidirectional traffic among network terminations (NTs) and a centrally
located head end (HE) \cite{van,res}. Such a communication is accomplished by a two-stage protocol. At the first stage,
an NT  requests a certain number of slots from the HE to transmit its data. If the HE receives the request successfully initiates the second stage by sending a grant to the NT. However, both NT and HE shares the same channel, and thus, the capacity of the channel is divided between these two stages. A portion of channel capacity is dedicated to data transmission
of NTs already having a grant, and the rest is dedicated to requests of NTs not yet having a
grant. Clearly, hardware/software malfunctions may interrupt the normal operation. In our setting, this situation maybe modelled by a Markov-modulated tandem queue with coupled processors where service at station 1 represents the process of receiving the requests,
whereas service at station 2 represents the transmission of the data due to the
successfully received requests. The network operation is modulated by a two state Markov process. 

Potential applications maybe found in bandwidth sharing of data flows \cite{gui1} and in device-to-device communication \cite{vita} by taking into account the impact of fading or signal attenuation, in assembly lines in manufacturing \cite{antr} for which two operations on each job must be performed
using a limited service capacity, by incorporating the presence of machine breakdowns.

We can further slightly modify the approach to analyse  
two-class coupled retrial queues with limited priority line in front of the server; see Section \ref{appl}. Such a model has potential application in the performance modelling of relay-assisted wireless cooperative networks, which operate as follows: There is network of a finite number of source users, a finite number of relay nodes and a common destination node. The source users (i.e., streams of jobs) transmit packets to the destination node (i.e., the service station) with the cooperation of the relays (i.e., the orbits). If a transmission of a user's packet to the destination fails, the relays store it in their buffers and try to forward it to the destination after a random time period. However, due to the interdependence among queues at relays, their transmission rates are mutually affected (i.e., coupled orbits). The destination mode may also generates its own packets that stores them in a finite buffer (i.e., a priority line) and have to be transmitted outside the network. For more information regarding the performance of such systems see \cite{dimpeva,dimpapad,dimtuan}.
%
%

The rest of the paper is organised as follows. In Section \ref{sec:mod}, the Markov-modulated reflected random walk in quarter plane is described in detail, and a system of functional equations along with some preliminary results are presented in Section \ref{sec:fun}. The main result (see Theorem \ref{th0}) regarding the application of the power series approximation (PSA) method is presented in Section \ref{psa}. A discussion regarding the numerical derivation of moments of the stationary distribution is also given. In Section \ref{sec:rbv}, we provide a detailed analysis to obtain the pgfs of the stationary joint distribution in terms of a solution of a Riemann boundary value problem (see Theorem \ref{thw}). Explicit expressions for the moments of the stationary distribution by employing the \textit{symmetric assumption} at phase $J(t)=0$ is given in Section \ref{symm} (See Theorem \ref{th2}). In Section \ref{appl} we further present an application of the approach to priority retrial systems with coupled orbits. We mention here that this model is a modification of the general model described in Section \ref{sec:mod}, and reveals the flexibility of the approach we apply. Numerical results obtained by using the PSA, as well as some observations about how the system parameters affect the system performance for a near priority system are given in Section \ref{sec:num}. Numerical validation of the PSA using the explicit expressions of the symmetrical case derived in Section \ref{symm} is also given. The paper concludes in Section \ref{conc}. 
\section{Model description}\label{sec:mod}
Consider a two-dimensional Markovian process $\{(X_{1}(t),X_{2}(t))\}$ on $\mathbb{Z}_{2}^{+}$ and a background process $\{J(t)\}$ on a finite state space $S_{0}=\{0,1,\ldots,N\}$. Assume that each of $\{X_{1}(t)\}$, $\{X_{2}(t)\}$ is skip free from the right, which means that their increments take values in $\{-1,0,1,\ldots\}$ when $J(t)=0$ and in $\{0,1,\ldots\}$ when $J(t)\neq 0$. The joint process $\{Z(t),t\geq0\}=\{(X_{1}(t),X_{2}(t),J(t)),t\geq 0\}$ is Markovian with state space $E=\mathbb{Z}_{2}^{+}\times S_{0}$. Note that the simplest version of $\{Z(t)\}$ corresponds to a two-dimensional quasi birth-death (QBD) process \cite{oz1} for which i.e., the increments of $\{X_{j}(t)\}$, $j=1,2,$ are in $\{-1,0,1\}$ when $J(t)=0$, and in $\{0,1\}$ when $J(t)=1,\ldots,N$.


The infinitesimal generator matrix $Q$ of $\{Z(t),t\geq0\}$ is represented in block form as
\begin{displaymath}
Q=[Q_{(x_{1},x_{2}),(x_{1}^{\prime},x_{2}^{\prime})};(x_{1},x_{2}),(x_{1}^{\prime},x_{2}^{\prime})\in\mathbb{Z}_{+}^{2}],
\end{displaymath} 
where each block $Q_{(x_{1},x_{2}),(x_{1}^{\prime},x_{2}^{\prime})}$ is given by $Q_{(x_{1},x_{2}),(x_{1}^{\prime},x_{2}^{\prime})}=[q_{(x_{1},x_{2},j),(x_{1}^{\prime},x_{2}^{\prime},j^{\prime})}]$ for $(x_{1},x_{2},j),(x_{1}^{\prime},x_{2}^{\prime},j^{\prime})\in E$, and $q_{(x_{1},x_{2},j),(x_{1}^{\prime},x_{2}^{\prime},j^{\prime})}$, are the infinitesimal transition rates from state $(x_{1},x_{2},j)$ to $(x_{1}^{\prime},x_{2}^{\prime},j^{\prime})$. Let $\mathbb{H}=\{-1,0,1,\ldots\}$, $\mathbb{H}^{+}=\{0,1,\ldots\}$. The transition rates satisfy the following rules:
\begin{enumerate}
\item For $j=j^{\prime}=0$ (i.e., phase 0, see Figure \ref{fig:2} for the QBD version of $\{Z(t)\}$, i.e., when the increments are in $\{-1,0,1\}$.),
\begin{displaymath}
q_{(x_{1},x_{2},0),(x_{1}^{\prime},x_{2}^{\prime},0)}=\left\{\begin{array}{ll}
q_{\Delta x_{1},\Delta x_{2}}(0),&x_{1}\geq1,x_{2}\geq1,\Delta x_{1},\Delta x_{2}\in \mathbb{H},\\
q^{(1)}_{\Delta x_{1},\Delta x_{2}}(0),&x_{1}\geq1,x_{2}=0,\Delta x_{1}\in \mathbb{H},\Delta x_{2}\in \mathbb{H}^{+},\\
q^{(2)}_{\Delta x_{1},\Delta x_{2}}(0),&x_{1}=0,x_{2}\geq1,\Delta x_{1}\in \mathbb{H}^{+},\Delta x_{2}\in \mathbb{H},\\
q^{(0)}_{\Delta x_{1},\Delta x_{2}}(0),&x_{1}=x_{2}=0,\Delta x_{1},\Delta x_{2}\in \mathbb{H}^{+},
\end{array}\right.
\end{displaymath}
where $\Delta x_{k}=x_{k}^{\prime}-x_{k}$, $k=1,2$. Moreover, 
\begin{equation}
\begin{array}{rl}
q^{(k)}_{\Delta x_{1},\Delta x_{2}}(0)=&q_{\Delta x_{1},\Delta x_{2}}(0),\,k=1,2,\,\Delta x_{1},\Delta x_{2}\in \mathbb{H}^{+},\\
q_{-1,-1}(0)=&0,
\end{array}
\label{as1}
\end{equation}
and for $w\in[0,1]$,
\begin{displaymath}
\begin{array}{rl}
q_{i,-1}(0)=&(1-w)q^{(2)}_{i,-1}(0),\,i\in\mathbb{H}^{+}\\
q_{-1,j}(0)=&wq^{(1)}_{-1,j}(0),\,j\in\mathbb{H}^{+},
\end{array}
\end{displaymath}
or equivalently,
\begin{equation}
\begin{array}{rl}
\frac{q_{-1,j}(0)}{q_{-1,j}^{(1)}(0)}+\frac{q_{i,-1}(0)}{q_{i,-1}^{(2)}(0)}=&1,\,i,j\in\mathbb{H}^{+}.
\end{array}\label{coupl}
\end{equation}
\item For $j=j^{\prime}=1,\ldots,N$,
\begin{equation}
q_{(x_{1},x_{2},j),(x_{1}^{\prime},x_{2}^{\prime},j)}=q_{\Delta x_{1},\Delta x_{2}}(j),\,\Delta x_{1},\Delta x_{2}\in \mathbb{H}^{+},
\label{ass2}
\end{equation}
i.e., when $J(t)\neq 0$, no transitions are allowed to the West, North-West, South, South-West and South-East (see Figure \ref{fig:3} for the corresponding version of $\{Z(t)\}$ when the increments are only in $\{0,1\}$ for $J(t)=1,\ldots,N$.).
\item For $j\neq j^{\prime}$,
\begin{equation}
q_{(x_{1},x_{2},j),(x_{1}^{\prime},x_{2}^{\prime},j^{\prime})}=
\theta_{j,j^{\prime}}a_{\Delta x_{1},\Delta x_{2}}^{(j,j^{\prime})},\,\Delta x_{1},\Delta x_{2}\in \mathbb{H}^{+},
\label{as3}
\end{equation}
i.e., a phase change from $j$ to $j^{\prime}$ triggers a transition of $\{(X_{1}(t),X_{2}(t));t\geq0\}$ from $(x_{1},x_{2})$ to $(x^{\prime}_{1},x^{\prime}_{2})$ with probability $a_{\Delta x_{1},\Delta x_{2}}^{(j,j^{\prime})}$.  
\end{enumerate}

\begin{figure*}
  \includegraphics[width=0.75\textwidth]{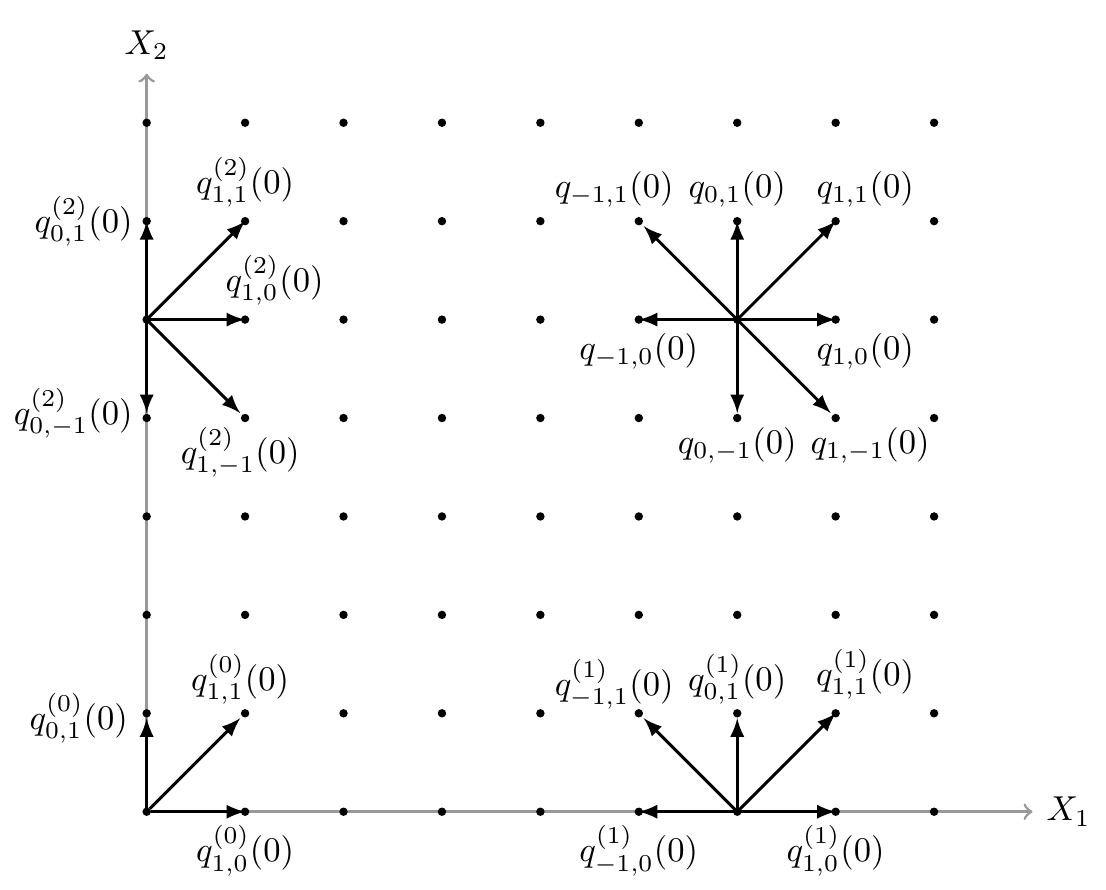}
\caption{State transition diagram of the QBD version of $\{Z(t),t\geq0\}$ at phase $J(t)=0$, i.e., the increments are in $\{-1,0,1\}$.}
\label{fig:2}       
\end{figure*}
\begin{figure*}
  \includegraphics[width=0.75\textwidth]{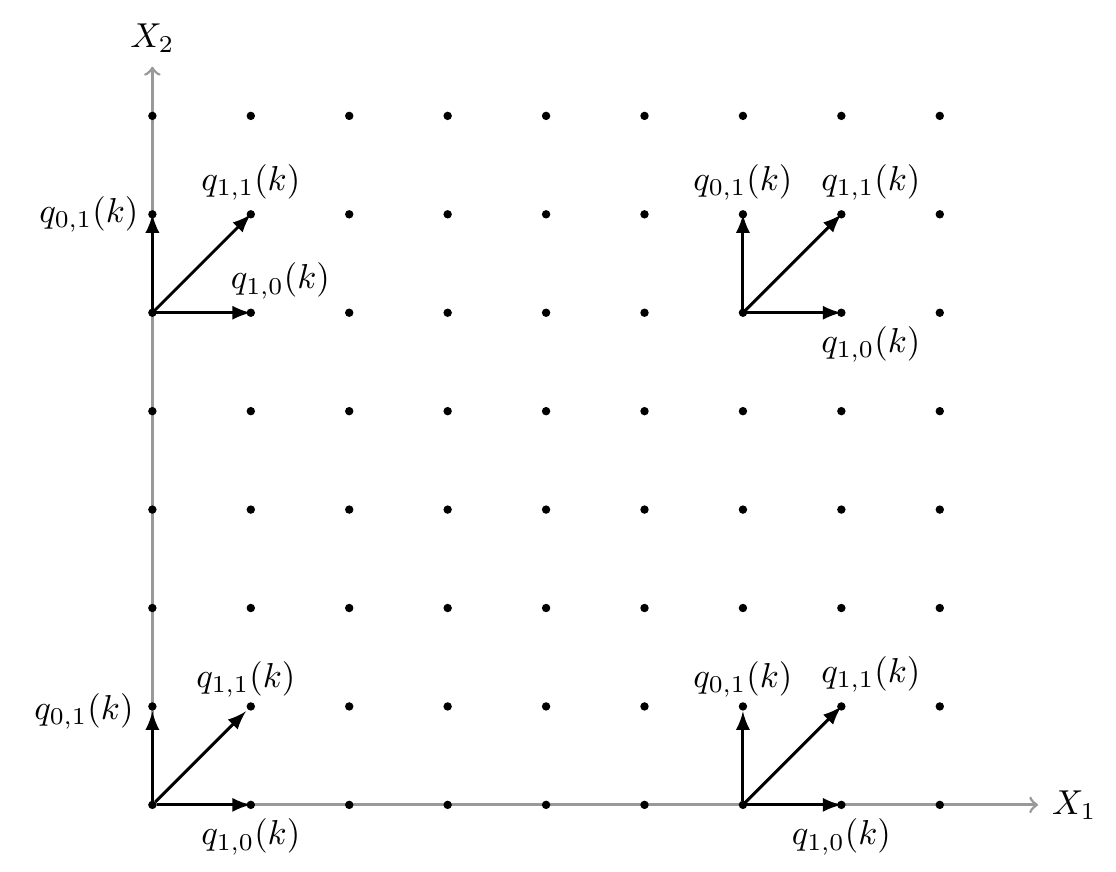}
\caption{State transition diagram of the QBD version of $\{Z(t),t\geq0\}$ at phase $J(t)=k$, $k=1,\ldots,N,$ i.e., the increments are in $\{0,1\}$.}
\label{fig:3}       
\end{figure*}
\begin{remark}
Note that the condition displayed in \eqref{as1} is \textbf{not} essential for the analysis that follows in Sections \ref{psa}, \ref{sec:rbv}. If it does not hold, the analysis can be easily modified. Condition \eqref{as1} is only useful in deriving easily the limiting probability of $\{Z(t);t\geq0\}$ at point $(0,0,0)$; see \eqref{lm}. 

Moreover, condition \eqref{coupl} \textit{is essential only} for the analysis in Section \ref{psa}. The analysis presented in Section \ref{sec:rbv} is general enough to consider also the case where condition \eqref{coupl} does not hold. However, some further technicalities are needed. Moreover, we can also easily modify the analysis in Section \ref{symm} to consider the case when \eqref{coupl} does not hold. Our main aim in this work is on the applicability of PSA \cite{walr} in the analysis of Markov-modulated reflected random walks in the quarter plane, and this is the reason why we employ condition \eqref{coupl}. 

Furthermore, conditions \eqref{ass2}, \eqref{as3} \textit{are essential} (in the sense that $\Delta x_{1},\Delta x_{2}$ should be in $\mathbb{H}^{+}$) both for the analysis in Section \ref{psa}, and for the one in Section \ref{sec:rbv}. This is because in case we allow $\Delta x_{1},\Delta x_{2}\in \mathbb{H}$, we introduce additional unknown \textit{boundary} functions that make the employed analysis intractable. 
\end{remark}

\section{The functional equations}\label{sec:fun}
Assume that the system is stable, and let the equilibrium probabilities
\begin{displaymath}
\pi_{i,j}^{(k)}=\lim_{t\to\infty}P((X_{1}(t),X_{2}(t),J(t))=(i,j,k)),\,(i,j,k)\in E.
\end{displaymath}
Let for $k=0,1,\ldots,N$
\begin{displaymath}
\Pi_{k}(x,y)=\sum_{i=0}^{\infty}\sum_{j=0}^{\infty}\pi_{i,j}^{(k)}x^{i}y^{j},|x|\leq 1,|y|\leq 1,
\end{displaymath}
and for $k,m=0,1,\ldots,N$, $k\neq m$,
\begin{displaymath}
A_{k,m}(x,y)=\sum_{i=0}^{\infty}\sum_{j=0}^{\infty}a_{i,j}^{(k,m)}x^{i}y^{j},|x|\leq 1,|y|\leq 1.
\end{displaymath}
By writing down the equilibrium equations we come up with the following system of functional equations
\begin{equation}
\begin{array}{rl}
R(x,y)\Pi_{0}(x,y)=&K(x,y)[(1-w)\Pi_{0}(x,0)-w\Pi_{0}(0,y)]\\&+C(x,y)\Pi_{0}(0,0)+xy\sum_{k=1}^{N}\theta_{k,0}A_{k,0}(x,y)\Pi_{k}(x,y),
\end{array}\label{a1}
\end{equation}
\begin{equation}
\Pi_{k}(x,y)=\frac{\sum_{m=0,m\neq k}^{N}\theta_{m,k}A_{m,k}(x,y)\Pi_{m}(x,y)}{D_{k}(x,y)},\,k=1,\ldots,N,
\label{e2}
\end{equation}
where for $k=0,1,\ldots,N$, 
\begin{displaymath}
\begin{array}{rl}
D_{k}(x,y)=&S_{k}(x,y)+\theta_{k,.},\\
S_{k}(x,y)=&\sum_{i=0}^{\infty}\sum_{j=0}^{\infty}q_{i,j}(k)(1-x^{i}y^{j})1_{\{(i,j)\neq(0,0)\}},\\
\theta_{k,.}=&\sum_{m\neq k}\theta_{k,m}.
\end{array}
\end{displaymath}
Equations \eqref{a1}, \eqref{e2} provide a relationship between the generating functions of the joint queue-length distribution for each network mode. Let $\mathbf{\Pi}(x,y)=(\Pi_{0}(x,y),\Pi_{1}(x,y),\ldots,\Pi_{N}(x,y))$, \eqref{a1}, \eqref{e2} and can be written in matrix form as
\begin{equation}
\begin{array}{l}
\mathbf{\Pi}(x,y)\mathbf{Q}(x,y)=[(1-w)\mathbf{\Pi}(x,0)-w\mathbf{\Pi}(0,y)]\mathbf{T}_{1}(x,y)+\mathbf{\Pi}(0,0)\mathbf{T}_{2}(x,y),	
\end{array}\label{matrix}
\end{equation}
where
\begin{displaymath}
\begin{array}{c}
\mathbf{Q}(x,y)=\begin{pmatrix}
R(x,y)&\theta_{0,1}A_{0,1}(x,y)&\ldots&\theta_{0,N}A_{0,N}(x,y)\\
-xy\theta_{1,0}A_{1,0}(x,y)&-D_{1}(x,y)&\ldots&\theta_{1,N}A_{1,N}(x,y)\\
\vdots&\vdots&\ddots&\vdots\\
-xy\theta_{N,0}A_{N,0}(x,y)&\theta_{N,1}A_{N,1}(x,y)&\ldots&-D_{N}(x,y)
\end{pmatrix},\vspace{2mm}\\
\mathbf{T}_{1}(x,y)=\begin{pmatrix}
K(x,y)&0&\ldots&0\\
0&0&\ldots&0\\
\vdots&\vdots&\ddots&\vdots\\
0&0&\ldots&0\\
\end{pmatrix},\,\,\mathbf{T}_{2}(x,y)=\begin{pmatrix}
C(x,y)&0&\ldots&0\\
0&0&\ldots&0\\
\vdots&\vdots&\ddots&\vdots\\
0&0&\ldots&0\\
\end{pmatrix}.
\end{array}
\end{displaymath}
Moreover,
\begin{displaymath}
\begin{array}{rl}
R(x,y)=&xy D_{0}(x,y)+y\sum_{j=0}^{\infty}q_{-1,j}(0)(x-y^{j})+x\sum_{i=0}^{\infty}q_{i,-1}(0)(y-x^{i}),\\
K(x,y)=&x\sum_{i=0}^{\infty}q^{(2)}_{i,-1}(0)(y-x^{i})-y\sum_{j=0}^{\infty}q^{(1)}_{-1,j}(0)(x-y^{j}),\\
C(x,y)=&wK(x,y)+y\sum_{j=0}^{\infty}q^{(1)}_{-1,j}(0)(x-y^{j})\\
=&-(1-w)K(x,y)+x\sum_{i=0}^{\infty}q^{(2)}_{i,-1}(0)(y-x^{i}).
\end{array}
\end{displaymath}

Equation \eqref{matrix} has the same form as equation (1.2) in the seminal paper \cite{fay1}. The major difference relies on the fact that in our
case we deal with a matrix fundamental equation. Due to its special structure, the problem of finding the solution of \eqref{matrix} can be reduced to the problem of finding the solution of a scalar fundamental form as follows. 

Note that equation \eqref{e2} defines a $N\times N$ system of linear equations
\begin{equation}
\mathbf{L}(x,y)^{T}\mathbf{P}(x,y)=\mathbf{E}(x,y)\Pi_{0}(x,y),\label{e3}
\end{equation}
where 
\begin{displaymath}
\begin{array}{rl}
\mathbf{P}(x,y)=&(\Pi_{1}(x,y),\ldots,\Pi_{N}(x,y))^{T},\\
\mathbf{E}(x,y)=&(\theta_{0,1}A_{0,1}(x,y),\ldots,\theta_{0,N}A_{0,N}(x,y))^{T},\\
\mathbf{L}(x,y)=&\begin{pmatrix}
D_{1}(x,y)&-\theta_{1,2}A_{1,2}(x,y)&-\theta_{1,3}A_{1,3}(x,y)&\ldots&-\theta_{1,N}A_{1,N}(x,y)\\
-\theta_{2,1}A_{2,1}(x,y)&D_{2}(x,y)&-\theta_{2,3}A_{2,3}(x,y)&\ldots&-\theta_{2,N}A_{2,N}(x,y)\\
\vdots&\vdots&\ddots&\vdots&\vdots\\
-\theta_{N,1}A_{N,1}(x,y)&-\theta_{N,2}A_{N,2}(x,y)&-\theta_{N,3}A_{N,3}(x,y)&\ldots&D_{N}(x,y)
\end{pmatrix}
\end{array}
\end{displaymath}
The system of equation \eqref{e3} yields
\begin{equation}
\begin{array}{rl}
\Pi_{k}(x,y)=&F_{0,k}(x,y)\Pi_{0}(x,y),\,k=1,\ldots,N,\\
F_{0,k}(x,y)=&\frac{1}{det[\mathbf{L}(x,y)^{T}]}(cof \mathbf{L}(x,y)^{T})^{T}\mathbf{E}(x,y),
\end{array}\label{a2}
\end{equation}
provided $det[\mathbf{L}(x,y)^{T}]\neq 0$, and where 
$cof \mathbf{L}(x,y)^{T}$ is the cofactor matrix of $\mathbf{L}(x,y)^{T}$. Note that $\mathbf{L}(x,y)$ is a non-singular matrix for $|x|,|y|=1$, since it is strictly diagonally dominant. Indeed,
\begin{displaymath}
\begin{array}{rl}
|D_{k}(x,y)|\geq&\sum_{i=0}^{\infty}\sum_{j=0}^{\infty}q_{i,j}(k)(1-|x|^{i}|y|^{j})1_{\{(i,j)\neq(0,0)\}}+\sum_{m=0,m\neq k}\theta_{k,m}\\
=&\sum_{m=0,m\neq k}\theta_{k,m}>\sum_{m\neq k,0}\theta_{k,m}=\sum_{m\neq k,0}\theta_{k,m}|A_{k,m}(x,y)|\\\geq&|\sum_{m\neq k,0}\theta_{k,m}A_{k,m}(x,y)|.
\end{array}
\end{displaymath}

Therefore, since $\sum_{k=0}^{N}\Pi_{k}(1,1)=1$, the stationary probabilities of the process $\{Z(t),t\geq0\}$ to be in any state of the background process $\{J(t), t\geq0\}$ are given by,
\begin{equation}
\begin{array}{rl}
\Pi_{0}(1,1)=&[1+\sum_{j=1}^{N}F_{0,j}(1,1)]^{-1},\\
\Pi_{k}(1,1)=&F_{0,k}(1,1)\Pi_{0}(1,1),\,k=1,\ldots,N.
\end{array}\label{prob}
\end{equation}

Using \eqref{a1}, \eqref{a2} we come up with the following functional equation:
\begin{equation}
\begin{array}{rl}
\Pi_{0}(x,y)[R(x,y)T(x,y)-xy]=&T(x,y)\{K(x,y)[(1-w)\Pi_{0}(x,0)\\&-w\Pi_{0}(0,y)]+C(x,y)\Pi_{0}(0,0)\},
\end{array}\label{fe}
\end{equation}
where $T(x,y)=[\sum_{k=1}^{N}\theta_{k,0}A_{k,0}(x,y)F_{0,k}(x,y)]^{-1}$.
\subsection{Stability condition}
To derive the stability condition, we look carefully at $K(x,y)$, and realize that we can find a function $x=s(y)$, such that 
$K(s(y),y)=0$, and more importantly that $s(y)\to1$, as $y\to 1$\footnote{Such an observation was also made in \cite{dimm,van}.}. This is the most important point in obtaining the stability condition. Simple numerical examples have shown that based on the values of the transition rates $q_{-1,j}^{(1)}$, $q_{i,-1}^{(2)}$, $i,j\in\mathbb{H}^{+}$, the zero $x:=s(y)$, for $|y|=1$, $y\neq 1$, may lie either inside, or outside, or on the unit circle; see Figure \ref{fig:w}.

Note that if we consider the QBD version of $\{Z(t)\}$, i.e., the increments of $\{X_{j}(t)\}$ are in $\{-1,0,1\}$ when $J(t)=0$, and in $\{0,1\}$ when $J(t)=1,\ldots,N$, we find explicitly this function:
\begin{displaymath}
\begin{array}{rl}
x:=s(y)=&\frac{-[y(q^{(1)}_{-1,0}(0)+q^{(1)}_{-1,1}(0)-q^{(2)}_{0,-1}(0)-q^{(2)}_{1,-1}(0))-q^{(2)}_{0,-1}(0)]+\sqrt{R(y)}}{2q^{(2)}_{1,-1}(0)},\\
R(y)=&[y(q^{(1)}_{-1,0}(0)+q^{(1)}_{-1,1}(0)-q^{(2)}_{0,-1}(0)-q^{(2)}_{1,-1}(0))-q^{(2)}_{0,-1}(0)]^{2}\\&+4q^{(2)}_{1,-1}(0)y(q^{(1)}_{-1,1}(0)y+q^{(1)}_{-1,0}(0)),
\end{array}
\end{displaymath}
and $\lim_{y\to 1}s(y)=1$. 

In the general case, it is rather impossible to find explicitly the function $s(y)$. In particular, $K(x,y)=0$ is equivalent with
\begin{equation}
\begin{array}{c}
x=W(x,y):=\frac{y(x\sum_{i=0}^{\infty}q^{(2)}_{i,-1}(0)+\sum_{j=0}^{\infty}q^{(1)}_{-1,j}(0)y^{j})}{\sum_{i=0}^{\infty}q^{(2)}_{i,-1}(0)x^{i}+y\sum_{j=0}^{\infty}q^{(1)}_{-1,j}(0)}.
\end{array}\label{bbb}
\end{equation}
Note that for $|x|\leq 1$, $|y|\leq 1$, the denominator in \eqref{bbb} never vanishes. Therefore, there exists a zero $x=s(y)$ such that $s(y)=W(s(y),y)$ with $s(1)=1$ (it is straightforward to see that for $y=1$, $x=1$ satisfies \eqref{bbb}). In any case, to obtain the stability condition it is neither essential to find an explicit expression for $x=s(y)$, nor to investigate its location for $|y|=1$, $y\neq 1$. We only need the value of its derivative with respect to $y$ at point $y=1$ (and of course to have $s(y)\to 1$ as $y\to 1$). Taking the derivative with respect to $y$ in $K(s(y),y)=0$, and letting $y\to1$ yields
\begin{displaymath}
\frac{ds(y)}{dy}|_{y=1}=\frac{\sum_{i=0}^{\infty}q^{(2)}_{i,-1}(0)+\sum_{j=0}^{\infty}jq^{(1)}_{-1,j}(0)}{\sum_{i=0}^{\infty}iq^{(2)}_{i,-1}(0)+\sum_{j=0}^{\infty}q^{(1)}_{-1,j}(0)}.
\end{displaymath}  

Thus, equation \eqref{fe} reduces to
\begin{equation}
\Pi_{0}(s(y),y)=\frac{T(s(y),y)C(s(y),y)}{R(s(y),y)T(s(y),y)-s(y)y}\Pi_{0}(0,0).
\label{stas}
\end{equation}
Letting $y\to 1$, and having in mind that $\lim_{y\to 1}s(y)=1$, \eqref{stas} yields (after applying once the L'Hospital rule) $\Pi_{0}(0,0)$ and the ergodicity condition of $\{Z(t),t\geq0\}$, i.e,
\begin{equation}
\begin{array}{rl}
\Pi_{0}(0,0)=&[1+\sum_{j=1}^{N}F_{0,j}(1,1)]^{-1}\\&\times\lim_{y\to 1}\left(\frac{\frac{d}{dy}[R(s(y),y)T(s(y),y)-s(y)y]}{\frac{d}{dy}[T(s(y),y)C(s(y),y)]}\right)>0.
\end{array}\label{lm}
\end{equation}
\begin{remark}
Ergodicity conditions for related two-dimensional QBD processes are also derived in \cite{oz,oz1} using the concept of induced Markov chains \cite{faymen}.
\end{remark}

\begin{figure*}
  \includegraphics[width=0.46\textwidth]{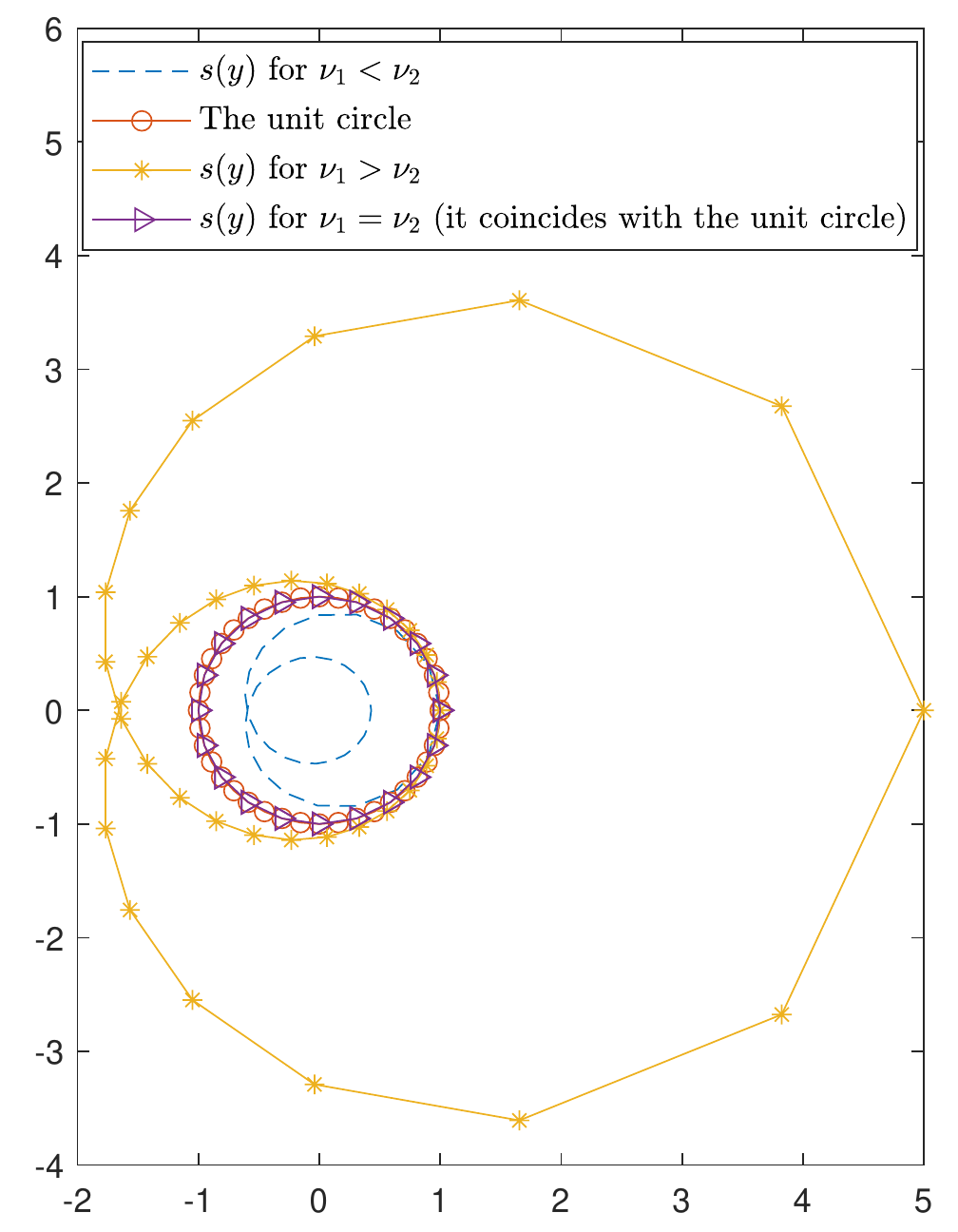}
\caption{The curve $x=s(y)$ for $q^{(1)}_{-1,1}(0)=\nu_{1}$, $q^{(1)}_{-1,j}(0)=0$, $j\neq 1$, $q^{(2)}_{0,-1}(0)=\nu_{2}$, $q^{(1)}_{i,-1}(0)=0$, $i\neq 0$.}
\label{fig:w}       
\end{figure*}
\subsection{A special case}\label{subse}
Consider the special case $\theta_{i,j}=0$, $i,j=1,\ldots,N$, i.e., the process switch always from any state $i\neq0$, only to state $0$. Then,
\begin{displaymath}
\mathbf{L}(x,y)=diag[D_{1}(x,y),\ldots,D_{N}(x,y)],\,\theta_{i,.}=\theta_{i,0},\,i=1,\ldots,N.
\end{displaymath}
In such a case,
\begin{equation}
\begin{array}{rl}
\Pi_{k}(x,y)=&\frac{\theta_{0,k}A_{0,k}(x,y)}{D_{k}(x,y)}\Pi_{0}(x,y),\,k=1,\ldots,N,\\
\Pi_{k}(1,1)=&\frac{\theta_{0,k}}{\theta_{k,0}}\Pi_{0}(1,1),
\end{array}\label{e4}
\end{equation}
and,
\begin{displaymath}
\Pi_{0}(1,1)=\frac{\prod_{j=1}^{N}\theta_{j,0}}{\sum_{k=0}^{N}\theta_{0,k}\prod_{j=0,j\neq k}^{N}\theta_{j,0}}.
\end{displaymath}
Thus, using the second in \eqref{e4} we can have the rest stationary probabilities the process $\{Z(t)\}$ to be in any phase $k$, $k=1,\ldots,N$.
Moreover, in such a case,
\begin{equation}
\begin{array}{rl}
T(x,y):=&\frac{D(x,y)}{\sum_{j=1}^{N}\theta_{j,0}A_{j,0}(x,y)\theta_{0,j}A_{0,j}(x,y)D^{(j)}(x,y)},\\
D^{(j)}(x,y):=&\prod_{k=1,k\neq j}^{N}D_{k}(x,y),\,j=1,\ldots,N,\\
D(x,y):=&\prod_{k=1}^{N}D_{k}(x,y).
\end{array} \label{edd}
\end{equation}
\subsection{The \textit{trivial} cases $w=0$ and $1$}
Note that when $w=0$ (resp. $w=1$), there are no transitions to West (resp. South) and to North-West (resp. South-East) from the interior states, when we are in phase $J(t)=0$, i.e., $q_{-1,0}(0)=q_{-1,1}(0)=0$ (resp. $q_{0,-1}(0)=q_{1,-1}(0)=0$). Such cases are not of our main interest but can serve as ``boundary" cases for the main subject of concern, which is the case $w\in(0,1)$ investigated in the following sections. In the following we only treat the case $w=0$. Analogous results can be obtained for the case $w=1$.

Note that for $w=0$, the functional equation \eqref{fe} reduces to
\begin{equation}
\begin{array}{rl}
\Pi_{0}(x,y)[R(x,y)T(x,y)-xy]=&T(x,y)[K(x,y)\Pi_{0}(x,0)+C(x,y)\Pi_{0}(0,0)].
\end{array}\label{fe1}
\end{equation}
It is not difficult to show by using Rouch\'e's theorem \cite{titc} (see Theorem \ref{th1} in Section \ref{sec:rbv}) that the left hand side of \eqref{fe1} has a single zero, say $y=\tilde{Y}(x)$, for $|x|=1$, such that $|\tilde{Y}(x)|<1$, and $\tilde{Y}(1)=1$. Then, after some algebra,
\begin{displaymath}
\Pi_{0}(x,y)=\frac{T(x,y)[K(x,y)C(x,\tilde{Y}(x))-K(x,\tilde{Y}(x))C(x,y)]}{K(x,\tilde{Y}(x))[xy-T(x,y)R(x,y)]}\Pi_{0}(0,0).
\end{displaymath}
The rest pgfs $\Pi_{k}(x,y)$, $k=1,\ldots,N$ can be obtained by using \eqref{a2}.
    
\section{The power series approximations in $w\in(0,1)$}\label{psa}
Starting by \eqref{fe}, our aim is to construct a power series expansion of the pgf
$\Pi_{0}(x, y)$ in $w$, and then, with this result to construct
power series expansions of $\Pi_{k}(x, y)$ in $w$ using \eqref{e4}. Let
\begin{displaymath}
\Pi_{k}(x,y)=\sum_{m=0}^{\infty}V_{m}^{(k)}(x,y)w^{m},\,k=0,1,\ldots,N.
\end{displaymath}

The major difficulty in solving \eqref{fe} corresponds to the presence of the two
unknown boundary functions $\Pi_{0}(x, 0)$, $\Pi_{0}(0, y)$. We proceed as in \cite{walr,dim,dimm} and observe that \eqref{fe} is rewritten as 
\begin{equation}
\begin{array}{l}
G(x,y)\Pi_{0}(x,y)-G_{10}(x,y)\Pi_{0}(x,0)-G_{00}(x,y)\Pi_{0}(0,0)\\
=wG_{10}(x,y)[\Pi_{0}(x,y)-\Pi_{0}(x,0)-\Pi_{0}(0,y)+\Pi_{0}(0,0)],
\end{array}\label{jk}
\end{equation}
where
\begin{equation}
\begin{array}{rl}
G(x,y)=&T(x,y)[yD_{0}(x,y)+\sum_{i=0}^{\infty}q_{i,-1}^{(2)}(0)(y-x^{i})]-y,\\
G_{10}(x,y)=&T(x,y)\{\sum_{i=0}^{\infty}q_{i,-1}^{(2)}(0)(y-x^{i})-y\sum_{j=0}^{\infty}q_{-1,j}^{(1)}(0)(1-x^{-1}y^{j})\},\\
G_{00}(x,y)=&yT(x,y)\sum_{j=0}^{\infty}q_{-1,j}^{(1)}(0)(1-x^{-1}y^{j}).
\end{array}
\end{equation}

The following theorem summarizes the main result.
\begin{theorem}\label{th0}
Under stability condition \eqref{lm},
\begin{equation}
\begin{array}{l}
V_{m}^{(0)}(x,y)=Q_{m-1}(x,y)\frac{G_{10}(x,y)}{G(x,y)},\,m>0,
\end{array}\label{res0}
\end{equation}
\begin{equation}
\begin{array}{rl}
V_{0}^{(0)}(x,y)=&\frac{G_{00}(x,y)G_{10}(x,Y(x))-G_{00}(x,Y(x))G_{10}(x,y)}{G(x,y)G_{10}(x,Y(x))}V_{0}^{(0)}(0,0),\vspace{2mm}\\
V_{m}^{(k)}(x,y)=&F_{0,k}(x,y)V_{m}^{(0)}(x,y),\,k=1,\ldots,N,\,m\geq0,
\end{array}
\label{res1}
\end{equation}
where $Y(x)$, $|x|\leq 1$ is the only zero of $G(x,y)$ in the unit disc $|y|\leq 1$, and for $m\geq 0$, 
\begin{displaymath}
Q_{m}(x,y)=V_{m}^{(0)}(x,y)-V_{m}^{(0)}(x,Y(x))-V_{m}^{(0)}(0,y)+V_{m}^{(0)}(0,Y(x)),
\end{displaymath}
with $Q_{-1}(x,y)=0$.
\end{theorem}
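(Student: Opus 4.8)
The plan is to substitute the power series $\Pi_k(x,y)=\sum_{m\ge0}V_m^{(k)}(x,y)w^m$ into the rewritten functional equation \eqref{jk} and equate coefficients of $w^m$. Because the right-hand side of \eqref{jk} carries an explicit factor $w$, the $w^m$-balance couples order $m$ only to order $m-1$: matching $w^0$ gives the homogeneous relation $G(x,y)V_0^{(0)}(x,y)=G_{10}(x,y)V_0^{(0)}(x,0)+G_{00}(x,y)V_0^{(0)}(0,0)$, while for $m\ge1$ one obtains
\[
G(x,y)V_m^{(0)}(x,y)-G_{10}(x,y)V_m^{(0)}(x,0)-G_{00}(x,y)V_m^{(0)}(0,0)=G_{10}(x,y)\widehat{Q}_{m-1}(x,y),
\]
with $\widehat{Q}_{m-1}(x,y)=V_{m-1}^{(0)}(x,y)-V_{m-1}^{(0)}(x,0)-V_{m-1}^{(0)}(0,y)+V_{m-1}^{(0)}(0,0)$. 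Thus at each order the only ``current'' unknowns are the boundary function $V_m^{(0)}(x,0)$ and the constant $V_m^{(0)}(0,0)$, since $V_m^{(0)}(0,y)$ has been deferred to the next order; this is exactly the device that renders the recursion solvable.

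The engine of the proof is the kernel root $Y(x)$. First I would invoke Rouch\'e's theorem (Theorem \ref{th1}) to show that for each $|x|\le1$ the equation $G(x,y)=0$ has a unique zero $Y(x)$ in $|y|\le1$, with $Y(1)=1$. Since every $V_m^{(0)}$ must stay finite on the closed bidisc, upon solving for $V_m^{(0)}$ the numerator must vanish at $y=Y(x)$; this single scalar ``zero-cancellation'' condition ties $V_m^{(0)}(x,0)$ to $V_m^{(0)}(0,0)$. For $m=0$ it gives $V_0^{(0)}(x,0)=-\,G_{00}(x,Y(x))V_0^{(0)}(0,0)/G_{10}(x,Y(x))$, and reinserting this into the $w^0$-balance yields the closed form \eqref{res1} for $V_0^{(0)}$; the surviving constant $V_0^{(0)}(0,0)=\Pi_0(0,0)$ is fixed by the normalisation \eqref{lm}.

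For $m\ge1$ the same elimination produces
\[
V_m^{(0)}(x,y)=\frac{G_{10}(x,y)}{G(x,y)}\,Q_{m-1}(x,y)+\frac{V_m^{(0)}(0,0)}{V_0^{(0)}(0,0)}\,V_0^{(0)}(x,y).
\]
In extracting the particular term one uses $\widehat{Q}_{m-1}(x,y)=Q_{m-1}(x,y)-Q_{m-1}(x,0)$ and $Q_{m-1}(x,Y(x))=0$ (both immediate from the definitions), the latter cancelling the zero of $G$ at $y=Y(x)$ so that the term is analytic; the identity $G(x,0)=G_{10}(x,0)$ (a one-line check of the two definitions at $y=0$) guarantees that the recovered boundary value $V_m^{(0)}(x,0)$ is consistent. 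It then remains to kill the homogeneous second term, i.e.\ to prove $V_m^{(0)}(0,0)=0$ for $m\ge1$, which I expect to be the delicate point. For this I would match $w^m$-coefficients in the normalisation $\sum_k\Pi_k(1,1)=1$, which together with \eqref{a2} gives $V_m^{(0)}(1,1)=0$; since $Y(1)=1$ yields $G(1,1)=Q_{m-1}(1,1)=0$ and a direct evaluation gives $G_{10}(1,1)=0$ as well, the particular term vanishes at $(1,1)$, whence $V_m^{(0)}(0,0)\,V_0^{(0)}(1,1)/V_0^{(0)}(0,0)=0$, and because $V_0^{(0)}(1,1)=\Pi_0(1,1)>0$ we conclude $V_m^{(0)}(0,0)=0$. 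The closed form \eqref{res0} then follows by induction on $m$, anchored by $Q_{-1}\equiv0$.

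Finally, the phase-$k$ relations in \eqref{res1} are immediate. Relation \eqref{a2} reads $\Pi_k=F_{0,k}\Pi_0$ with $F_{0,k}$ independent of $w$ (the matrix $\mathbf{L}$ and the vector $\mathbf{E}$ involve only the phase quantities $D_k$, $A_{k,m}$ and $\theta_{k,m}$ for $k\ge1$, none of which contains $w$), so equating $w^m$-coefficients gives $V_m^{(k)}=F_{0,k}V_m^{(0)}$ for all $m\ge0$ and $k=1,\dots,N$.
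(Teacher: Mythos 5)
Your proposal is correct and follows essentially the same route as the paper: substitute the power series into \eqref{jk}, match powers of $w$ to defer $\Pi_{0}(0,y)$ to the previous order, use the Rouch\'e root $Y(x)$ of $G(x,y)=0$ to eliminate $V_{m}^{(0)}(x,0)$ via analyticity, and back-substitute. Your justification that $V_{m}^{(0)}(0,0)=0$ for $m>0$ (via the $w$-independence of the normalisation at $(1,1)$) is a more explicit version of the paper's terse appeal to \eqref{lm}, but it is the same underlying idea.
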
\begin{proof}
Starting from \eqref{jk}, we observe that only $\Pi_{0}(0,y)$ appears with a factor $w$. By expanding the pgfs as power series
in $w$ and equating the coefficients of the corresponding powers of $w$, that unknown is eliminated. Indeed, we come up with
\begin{equation}
\begin{array}{rl}
G(x,y)V_{m}^{(0)}(x,y)=&G_{10}(x,y)[V_{m}^{(0)}(x,0)+P_{m-1}(x,y)]\\&+G_{00}(x,y)V_{m}^{(0)}(0,0),\,m\geq0,
\end{array}\label{fg}
\end{equation}
where
\begin{displaymath}
P_{m}(x,y)=V_{m}^{(0)}(x,y)-V_{m}^{(0)}(x,0)-V_{m}^{(0)}(0,y)+V_{m}^{(0)}(0,0),
\end{displaymath}
with $P_{-1}(x,y)=0$. Our aim is to express $V_{m}^{(0)}(x,y)$ in terms of $P_{-1}(x,y)$. Note now that there exist two unknowns, i.e., $V_{m}^{(0)}(x,0)$ and $V_{m}^{(0)}(0,0)$. By using Rouch\'e’s theorem \cite{adanrouche} we show in Appendix \ref{app1} that $G(x,y)=0$ has a unique root, $y=Y(x)$ such that $|Y(x)|<1$, $|x|=1$, $x\neq1$. Note that due to the \textit{implicit function theorem}, $Y(x)$
is an analytic function in the unit disc. Moreover, since $\Pi_{k}(x,y)$ is analytic for
all $x$ and $y$ in the unit disc, the terms $V_{m}^{(k)}(x,y)$ are also as well.

Thus, substituting in \eqref{fg} yields
\begin{equation}
V_{m}^{(0)}(x,0)=-\frac{G_{00}(x,Y(x))}{G_{10}(x,Y(x))}V_{m}^{(0)}(0,0)-P_{m-1}(x,Y(x)),
\label{za}
\end{equation}
and substituting \eqref{za} back in \eqref{fg} yields after some algebra the expressions for $V_{m}^{(0)}(x,y)$, $m\geq0$. Then, \eqref{e4} can be used to obtain the expressions for $V_{m}^{(k)}(x,y)$, $m\geq0$, $k=1,\ldots,N$ given in the second of \eqref{res1}.

It remains to calculate $V_{m}^{(0)}(0, 0)$ by using \eqref{lm}. Then, it follows that $V_{0}^{(0)}(0,0)=\Pi_{0}(0,0)$ and $V_{m}^{(0)}(0,0)=0$ for all $m > 0$. Hence starting from $V_{0}^{(0)}$ in \eqref{res0}, every $V_{m}^{(0)}$, $m>0$, and $V_{m}^{(k)}$, $m\geq0$, $k=1,\ldots,N$ can be determined iteratively via \eqref{res1}.
\end{proof}
\subsection{Performance metrics}
Having obtained iteratively the coefficients $V_{m}^{(k)}(x,y)$, $m\geq0$, $k=0,1,\ldots,N$, we are able to derive the moments of the stationary distribution of $\{Z(t);t\geq0\}$. The most interesting are 
\begin{equation*}
\begin{array}{rl}
E(X_{1})=&\sum_{m=0}^{\infty}w^{m}\frac{\partial}{\partial x}\sum_{k=0}^{N}V_{m}^{(k)}(x,1)|_{x=1},\vspace{2mm}\\
E(X_{2})=&\sum_{m=0}^{\infty}w^{m}\frac{\partial}{\partial y}\sum_{k=0}^{N}V_{m}^{(k)}(1,y)|_{y=1}.
\end{array}
\end{equation*}
Let $v_{m,1}=\frac{\partial}{\partial x}V_{m}^{(0)}(x,1)|_{x=1}.$ Then, using \eqref{a2}, \eqref{prob}, we finally obtain
\begin{equation}\begin{array}{l}
E(X_{1})=\frac{\sum_{k=1}^{N}\frac{\partial}{\partial x}F_{0,k}(x,1)|_{x=1}+[1+\sum_{k=1}^{N}F_{0,k}(1,1)]^{2}\sum_{m=0}^{\infty}w^{m}v_{m,1}}{1+\sum_{k=1}^{N}F_{0,k}(1,1)}.
\end{array}
\label{per}
\end{equation}
Similar expression can also be derived for $E(X_{2})$. By truncating the power series in (\ref{per}) we finally derive,
\begin{equation}
\begin{array}{rl}
E(X_{1})=&\frac{\sum_{k=1}^{N}\frac{\partial}{\partial x}F_{0,k}(x,1)|_{x=1}+[1+\sum_{k=1}^{N}F_{0,k}(1,1)]^{2}\sum_{m=0}^{M}w^{m}v_{m,1}+O(w^{M+1})}{1+\sum_{k=1}^{N}F_{0,k}(1,1)}.\end{array}
\label{pert}
\end{equation}
Clearly, the larger $M$ the better the approximation gets. Truncation yields accurate approximations for $w$ in the the neighborhood of 0. However, it is easy to note that the accurate calculation of the expressions in
(\ref{per}) requires the computation of the first derivatives of $V_{m}^{(0)}(x,y)$ for $m\geq 0$. It is evident that the calculation of these coefficients is far from straightforward due to the extensive use of L'Hospital's rule.

Obviously, our aim is to obtain an efficient approximation for the stationary metrics in the whole domain $w\in[0,1]$. Contrary to the truncated power series, we can use the Pad\'e approximants (as used in \cite{walr,vanle2}), which are rational functions of the form
\begin{displaymath}
[L/K](w)=\frac{\sum_{l=0}^{L}c_{1,l}w^{l}}{\sum_{k=0}^{K}c_{2,k}w^{k}},
\end{displaymath}
and have $L+K+2$ parameters to be set. As a normalization constant we can fix $c_{1,0} = 1$. The rest of them can be calculated from
the coefficients of the power series, by asking the derivatives of the Pad\'e approximant in $w=0$ and $1$ to be equal to the derivatives of the
power series of the performance metrics in $w = 0$ and $1$ (see \cite{walr,vanle2}).

\section{A Riemann boundary value problem}\label{sec:rbv}
Random walks in the quarter plane, which give rise to functional equations as given in \eqref{fe} are thoroughly discussed in \cite{coh,cohenRW,bv}. Our aim is to first obtain $\Pi_{0}(x,y)$ by formulating and solving a Riemann boundary value problem, and then, using that result to obtain the rest terms $\Pi_{k}(x,y)$ from \eqref{a2}.

The analysis of the kernel equation $U(x,y):=xy-\psi(x,y)$, where $\psi(x,y):=R(x,y)T(x,y)$, is the crucial step to obtain $\Pi_{0}(x,y)$, which is regular for $|x| < 1$, continuous for $|x|\leq 1$ for every fixed $y$ with $|y|\leq 1$; and similarly, with $x$, $y$ interchanged. Obviously, this means that $\Pi_{0}(x,y)$ should be finite for every zerotuple $(\widehat{x},\widehat{y})$ of the kernel $U(x,y)$ in $|x|\leq 1$, $|y|\leq 1$. Let 
\begin{displaymath}
A:=\{(x,y):U(x,y)=0,|x|\leq1,|y|\leq 1\}.
\end{displaymath} 
The approach is very briefly summarized as follows (see also \cite{coh,cohenRW,bv}):
\begin{enumerate}
\item  Restrict the analysis to a subset of $A$, in which
\begin{equation}
K(\widehat{x},\widehat{y})[(1-w)\Pi_{0}(\widehat{x},0)-w\Pi_{0}(0,\widehat{y})]+C(\widehat{x},\widehat{y})\Pi_{0}(0,0)=0,\,(\widehat{x},\widehat{y})\in A.
\label{fgg}
\end{equation}
\item Our aim is to construct through \eqref{fgg} the boundary function $\Pi_{0}(x,0)$ (resp. $\Pi(0,y)$) to be regular in $|x|<1$ (resp. $|y|<1$) and continuous in $|x|\leq 1$ (resp. $|y|\leq 1$). Then, $\Pi_{0}(x,y)$, and as a consequence, all the $\Pi_{k}(x,y)$, $k=1,\ldots,N$ can be obtained through \eqref{fe}. The key vehicle to accomplish this task is to find two curves $S_{1}\subset\{x\in\mathbb{C}:|x|\leq 1\}$, $S_{2}\subset\{y\in\mathbb{C}:|y|\leq 1\}$, and a  a one-to-one map $x = \omega(y)$ from $S_2$ to $S_1$ such that $(\omega(\widehat{y}), \widehat{y})$ is a zerotuple
of $U(x,y)$ for all $\widehat{y}\in S_{2}$. The analyticity of $U(x,y)$ in $x$, $y$ implies that by analytic continuation all zerotuples of the kernel can be constructed starting from $S_{2}$.
\end{enumerate}
\subsection{Kernel analysis \& preliminary results}
Note that $\psi(0,0)=0$, and thus, kernel analysis is done following the lines in \cite[Sections II.3.10-3.12]{bv}. Consider the kernel for 
\begin{equation}
x=gs,\,y=gs^{-1},\,|g|\leq 1,|s|=1.
\label{ker}
\end{equation}
\eqref{ker} defines for $s=e^{i\phi}$ a one-to-one mapping $f$ such that $x(\phi)=f(y(\phi))$ or $y(\phi)=f^{-1}(x(\phi))$, $\phi\in[0,2\pi)$. Moreover,
\begin{equation}
\begin{array}{rl}
U(gs,gs^{-1})=0\Leftrightarrow g^{2}=\psi(gs,gs^{-1}).
\end{array}
\end{equation}
\begin{theorem}\label{th1}
Under stability condition \eqref{lm}, the kernel $U(gs,gs^{-1})$ has in $|g|\leq 1$ exactly two zeros, of which one is identically zero. Denote the other zero by $g=g(s)$, where $g(1)=1$. Moreover, for $|s|=1$, $g(s)=-g(-s)$, $g(\bar{s})=\overline{g(s)}$. 
\end{theorem}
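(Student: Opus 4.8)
The plan is to analyze $f_{s}(g):=U(gs,gs^{-1})=g^{2}-\psi(gs,gs^{-1})$ as a function of $g$ for each fixed $s$ on the unit circle and to count its zeros in $|g|\le1$ by Rouch\'e's theorem, comparing it with $g^{2}$. First I would record the two structural facts that make this legitimate. \emph{Analyticity}: $R(x,y)$ is analytic in the closed bidisc, while $T(x,y)=[\sum_{k}\theta_{k,0}A_{k,0}(x,y)F_{0,k}(x,y)]^{-1}$ is analytic there because the strict diagonal dominance of $\mathbf{L}(x,y)$ established above gives $\det[\mathbf{L}(x,y)^{T}]\neq0$ (so every $F_{0,k}$ is analytic) and the denominator $T(x,y)^{-1}$ does not vanish for $|x|,|y|\le1$; hence $\psi=RT$, and therefore $g\mapsto\psi(gs,gs^{-1})$, is analytic on a neighbourhood of $|g|\le1$. \emph{The trivial zero}: since every monomial of $R(x,y)$ carries a factor $x$ or $y$ we have $R(0,0)=0$, so $\psi(0,0)=0$ and $g=0$ solves $g^{2}=\psi(gs,gs^{-1})$ for every $s$; this is the zero that is ``identically zero''. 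A short expansion gives $\psi(gs,gs^{-1})=O(g)$, so the trivial zero is simple and the second zero $g(s)$ is a genuinely different point.

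The heart of the argument is the modulus estimate on $|g|=1$, where $|x|=|y|=|g|=1$: I would show $|\psi(gs,gs^{-1})|\le1=|g^{2}|$, with strict inequality except at the degenerate point $(x,y)=(1,1)$. The clean way to see this is to interpret $\psi=RT$ as the generating function of the level increments of the chain censored on phase $0$: the factor $T$ collects the nonnegative taboo weights of excursions through phases $1,\dots,N$ before the process returns to phase $0$, so $\psi$ has nonnegative Taylor coefficients, and these sum to $\psi(1,1)=1$. The normalization $\psi(1,1)=1$ I would verify directly: at $(1,1)$ one has $R(1,1)=D_{0}(1,1)=\theta_{0,\cdot}$ and, from the stationary balance of the phase process, $T(1,1)^{-1}=\sum_{k=1}^{N}\theta_{k,0}F_{0,k}(1,1)=\theta_{0,\cdot}$, whence $\psi(1,1)=1$ (this also re-derives $U(1,1)=0$). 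Given nonnegativity together with $\psi(1,1)=1$, the triangle inequality yields $|\psi(x,y)|\le\sum c_{ij}|x|^{i}|y|^{j}\le\sum c_{ij}=1$ on the closed torus, and under the natural aperiodicity of the walk equality forces $(x,y)=(1,1)$. Rouch\'e's theorem then shows $f_{s}$ has exactly two zeros in $|g|\le1$, as many as $g^{2}$: for $|s|=1$, $s\neq\pm1$, the point $(1,1)$ is not attained on $|g|=1$, the inequality is strict, and both zeros lie in $|g|<1$, the trivial one being $g=0$ and the remaining one $g=g(s)$.

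The exceptional value $s=1$ is where the main obstacle sits. There $x=y=g$, the estimate degenerates at $g=1$ because $\psi(1,1)=1$, and I would invoke the stability condition \eqref{lm} to place the second zero exactly on the boundary: the drift computation behind \eqref{lm} shows that the nontrivial zero crosses the unit circle precisely at $s=1$, so that $g(s)\to1$ as $s\to1$ and $g(1)=1$, the count of two zeros in the closed disc being preserved by continuity; the same reasoning at $s=-1$ gives the analogous boundary zero. Finally, the two symmetries follow from invariances of $f_{s}$: replacing $(g,s)$ by $(-g,-s)$ leaves $(gs,gs^{-1})$, hence $\psi$ and $g^{2}$, unchanged, so $f_{-s}(-g)=f_{s}(g)$ and therefore $g(s)=-g(-s)$; and since $R,T$ have real coefficients, $\overline{\psi(gs,gs^{-1})}=\psi(\bar g\bar s,\bar g/\bar s)$ for $|s|=1$, giving $\overline{f_{s}(g)}=f_{\bar s}(\bar g)$ and hence $g(\bar s)=\overline{g(s)}$. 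I expect the genuinely delicate points to be the nonnegativity and normalization of the coefficients of $\psi=RT$ (the censoring argument, or an equivalent direct estimate), and the boundary bookkeeping at $s=\pm1$, where the stability condition must be used to keep the nontrivial zero on the unit circle.
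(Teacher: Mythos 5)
Your plan rests on a single Rouch\'e comparison of $g^{2}$ with $\psi(gs,gs^{-1})$ on $|g|=1$, justified by the claim that $\psi=RT$ has nonnegative Taylor coefficients summing to $\psi(1,1)=1$, whence $|\psi|\le 1$ on the torus. The normalization $\psi(1,1)=1$ is correct (it is the cut balance $\sum_{k}\theta_{k,0}F_{0,k}(1,1)=\theta_{0,.}$), but the nonnegativity claim is false, and with it the modulus estimate that carries your whole argument. Writing $R(x,y)=xy\bigl[q_{0}-\Phi(x,y)\bigr]$ with $q_{0}=\theta_{0,.}+\sum_{(i,j)\neq(0,0)}q_{i,j}(0)+\sum_{j}q_{-1,j}(0)+\sum_{i}q_{i,-1}(0)$ and $\Phi(x,y)=\sum q_{i,j}(0)x^{i}y^{j}+x^{-1}\sum_{j}q_{-1,j}(0)y^{j}+y^{-1}\sum_{i}q_{i,-1}(0)x^{i}$, every non-constant coefficient of $q_{0}-\Phi$ is negative, and multiplying by $xyT$ does not restore positivity. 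Concretely, take $N=1$, $A_{0,1}=A_{1,0}=1$, $q_{i,j}(1)\equiv 0$, so $T=1/\theta_{0,1}$ and $\psi=R/\theta_{0,1}$; in the QBD version with arrival rates $q_{1,0}(0)$, $q_{0,1}(0)$ and departure rates $q_{-1,0}(0)$, $q_{0,-1}(0)$ one gets $\psi(-1,-1)=1+2\bigl(q_{1,0}(0)+q_{0,1}(0)+q_{-1,0}(0)+q_{0,-1}(0)\bigr)/\theta_{0,1}>1$, and $(x,y)=(-1,-1)$ is attained at $|g|=|s|=1$. The censoring interpretation you invoke applies not to $\psi=RT$ but to $\widehat{\psi}:=q_{0}^{-1}\bigl[\Phi(x,y)+\sum_{k}\theta_{k,0}A_{k,0}(x,y)F_{0,k}(x,y)\bigr]$, which has nonnegative coefficients summing to one and appears only after you factor $xy$ out of $R$ and clear $T$ via $U(x,y)=-q_{0}\,T(x,y)\,xy\,\bigl[1-\widehat{\psi}(x,y)\bigr]$. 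The Rouch\'e step must then be run on $g^{2}-g^{2}\widehat{\psi}(gs,gs^{-1})$ (note $g^{2}\widehat{\psi}(gs,gs^{-1})=O(g)$, which is also what makes the zero at $g=0$ simple rather than double), and you additionally need $T(gs,gs^{-1})$ to be zero- and pole-free in $|g|\le 1$ — a fact you assert but do not establish: diagonal dominance gives $\det[\mathbf{L}(x,y)^{T}]\neq 0$ and hence analyticity of the $F_{0,k}$, but says nothing about $\sum_{k}\theta_{k,0}A_{k,0}F_{0,k}$ being nonvanishing.

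For comparison, the paper sidesteps this by a two-stage Rouch\'e on the unnormalized kernel: it first rewrites $R(gs,gs^{-1})=0$ as $g^{2}=L(gs,gs^{-1})$ and bounds $|L|$ by $\bigl(q_{-1,0}(0)+q_{0,-1}(0)\bigr)/\bigl(\theta_{0,.}+q_{-1,0}(0)+q_{0,-1}(0)\bigr)<1=|g|^{2}$ to count the zeros of $R$, and then transfers the count to $U$ via the sandwich $\bigl|g^{2}\sum_{j}\theta_{j,0}\theta_{0,j}A_{0,j}A_{j,0}/D_{j}\bigr|<\theta_{0,.}<|R(gs,gs^{-1})|$ on $|g|=1$. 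The remaining items in your proposal are fine in outline and go beyond what the paper actually writes down: $g=0$ is a simple zero because $R(0,0)=0$ while the $x^{-1},y^{-1}$ terms make the kernel $O(g)$; $g(1)=1$ follows from $\psi(1,1)=1$ together with the drift condition behind \eqref{lm}; and the symmetries $g(s)=-g(-s)$, $g(\bar{s})=\overline{g(s)}$ follow from the invariance of $(gs,gs^{-1})$ under $(g,s)\mapsto(-g,-s)$ and the realness of the coefficients. But the central estimate as you state it is wrong and must be rebuilt around $\widehat{\psi}$ (or around the paper's two-stage comparison).
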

\begin{proof}
See Appendix \ref{app2}
\end{proof}

Let,
\begin{displaymath}
S_{1}:=\{x:x=g(s)s,|s|=1\},\,\,S_{2}:=\{y:y=g(s)s^{-1},|s|=1\},
\end{displaymath}
where $g(s)$ the positive zero of the kernel. 

To proceed we have to show that both $S_{1}$ and $S_{2}$ are \textit{simple} and \textit{smooth}. Although it is not difficult to show that under stability conditions, $S_{1}$, $S_{2}$ are both smooth (i.e., $\frac{d}{ds}g(s)$ exists in $|s|=1$), it is not possible to prove that, for general values of the system parameters, that the contours $S_{1}$ and $S_{2}$ are simply connected. This is due to the variety of possible locations of $x=0$, and $y=0$ with respect to $S_{1}$and $S_{2}$, respectively. Numerical results have shown that for small values of $w$ both $S_{1}$ and $S_{2}$ are simply connected; see Figures \ref{fig:4}, \ref{fig:5}. So, we restrict to the case where $S_{1}$ and $S_{2}$ are simple curves. In our case (see \cite[II.3.10]{bv}), this problem refers to the relation among $q_{-1,0}(0)$, $q_{0,-1}(0)$. In particular
\begin{enumerate}
\item If $q_{-1,0}(0)<q_{0,-1}(0)\Rightarrow x=0\in S_{1}^{+},\,y=0\in S_{2}^{-}$,
\item If $q_{-1,0}(0)=q_{0,-1}(0)\Rightarrow x=0\in S_{1},\,y=0\in S_{2}$,
\item If $q_{-1,0}(0)>q_{0,-1}(0)\Rightarrow x=0\in S_{1}^{-},\,y=0\in S_{2}^{+}$,
\end{enumerate}  
where $S_{j}^{+}$ (resp. $S_{j}^{-}$) denotes the interior (resp. the exterior) of $S_{j}$, $j=1,2$. Furthermore, if $s$ traverses the unit circle once, then $S_{1}$ is traversed twice anticlockwise, and $S_{2}$ is traversed twice clockwise.
\begin{figure*}
  \includegraphics[width=0.46\textwidth]{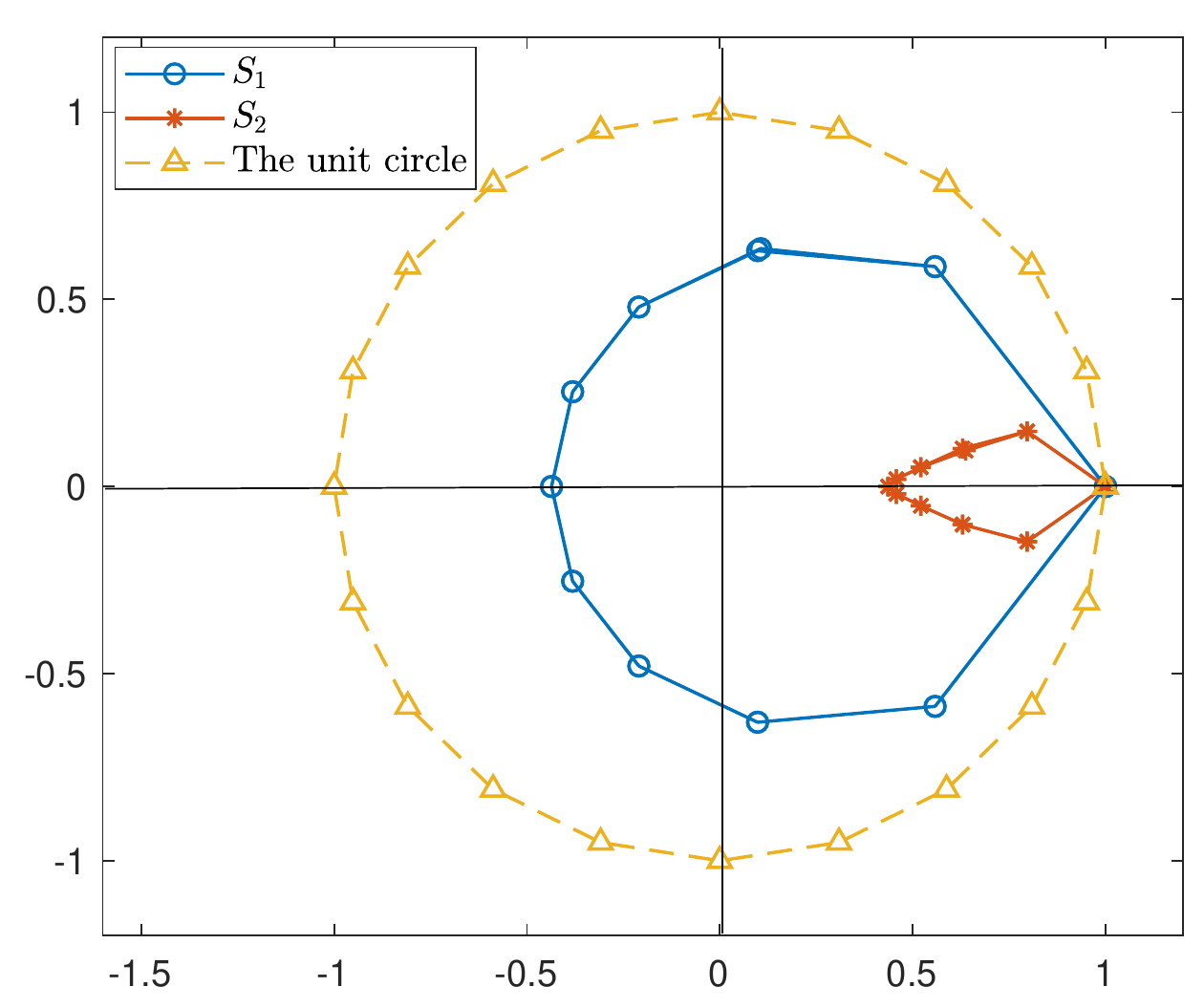}\,\,\includegraphics[width=0.45\textwidth]{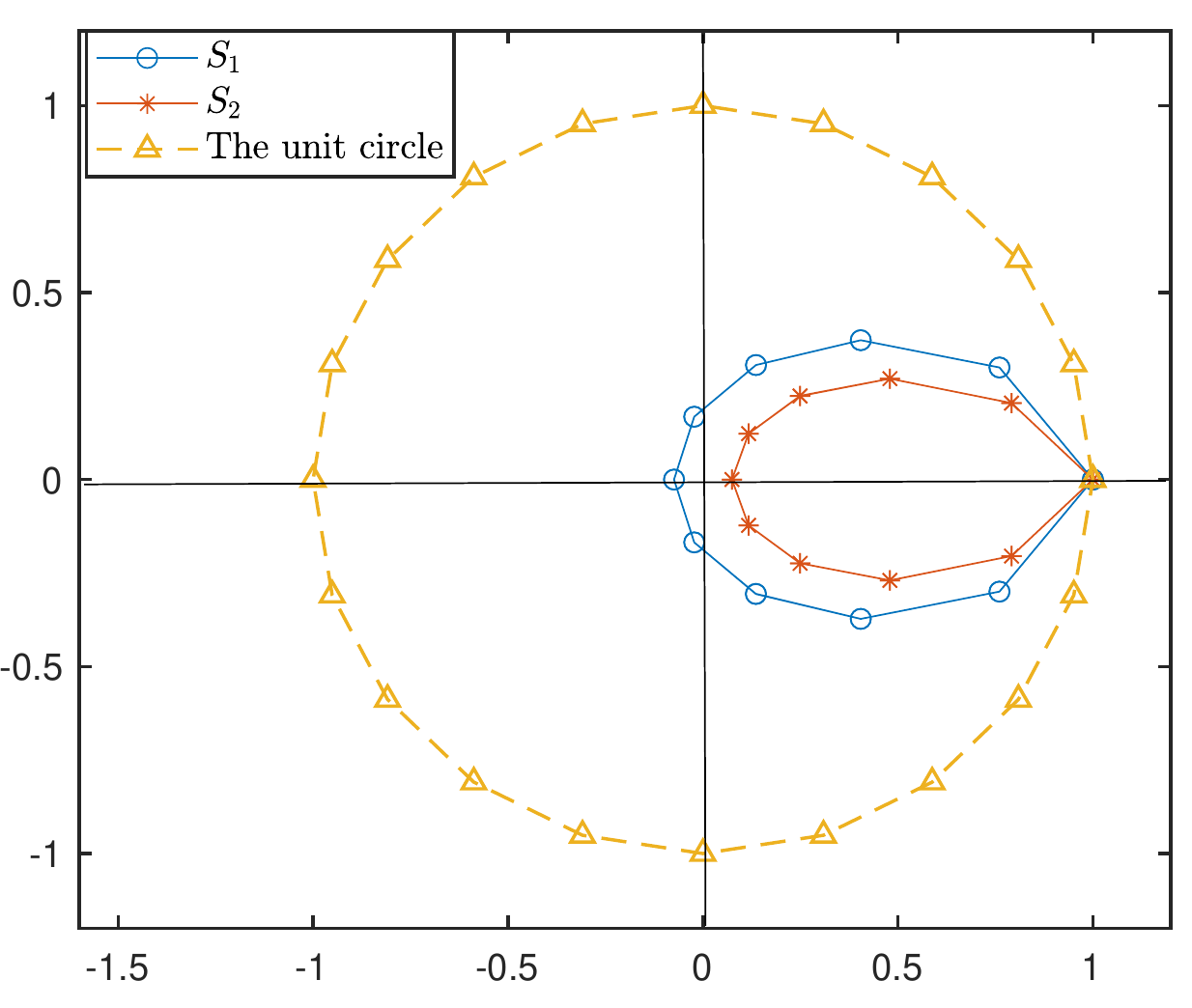}
\caption{The curves $S_{1}$, $S_{2}$ and the unit circle, when $w=0.1$ (left) and $w=0.5$ (right) ($\rho=0.4535<1$).}
\label{fig:4}       
\end{figure*}
\begin{figure*}
  \includegraphics[width=0.46\textwidth]{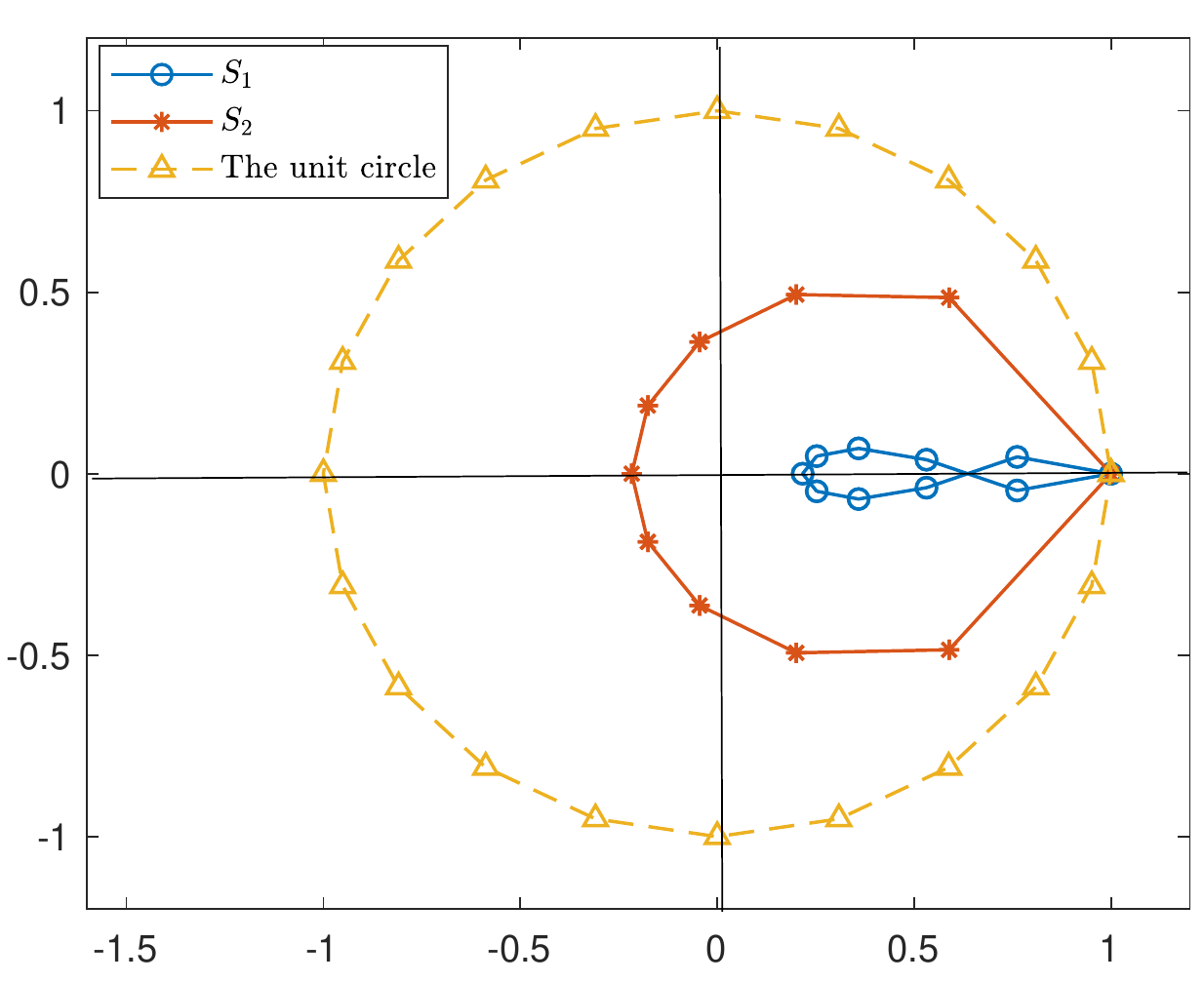}\,\,\includegraphics[width=0.46\textwidth]{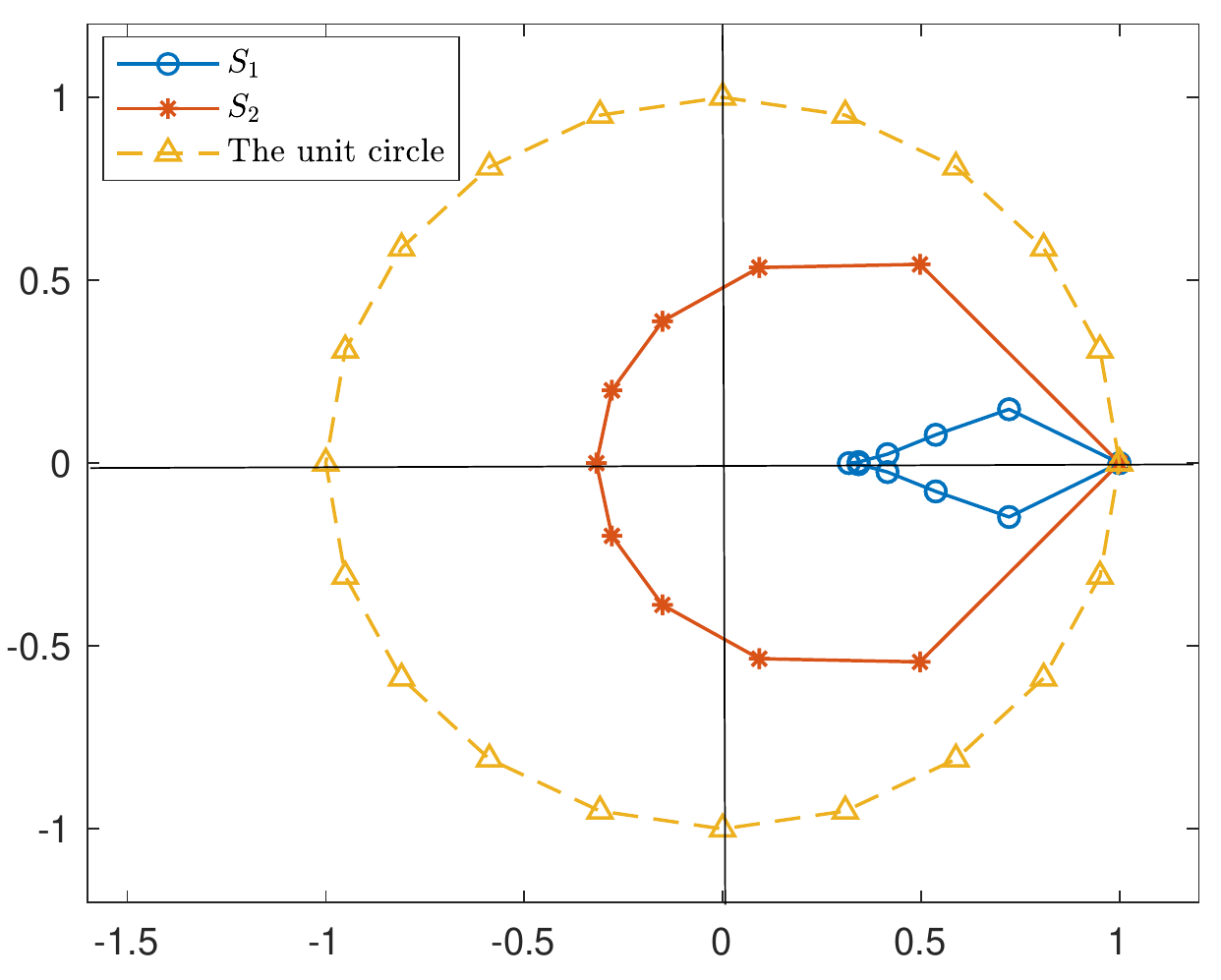}
\caption{The curves $S_{1}$, $S_{2}$ and the unit circle when $w=0.8$ (left) and $w=0.9$ (right) ($\rho=0.4535<1$).}
\label{fig:5}       
\end{figure*}


For the functions 
\begin{displaymath}
x=g(s)s,\,y=g(s)s^{-1},\,\,|s|=1,
\end{displaymath}
which forms a zero-pair of the kernel $U(x,y)=0$, we consider the following boundary value problem: Construct a simple smooth contour $\mathcal{L}$, a function $\lambda(z)$, $z\in\mathcal{L}$, and two functions $x(z)$, $y(z)$ such that
\begin{enumerate}
\item $z=0\in \mathcal{L}^{+}$, $z=1\in\mathcal{L}$, $z=\infty\in\mathcal{L}^{-}$,
\item $\lambda(z):\mathcal{L}\to[0,2\pi)$, $\lambda(1)=0$,
\item $x(z)$ is regular for $z\in\mathcal{L}^{+}$, continuous and univalent in $\mathcal{L}\cup\mathcal{L}^{+}$.
\item $y(z)$ is regular for $z\in\mathcal{L}^{-}$, continuous and univalent in $\mathcal{L}\cup\mathcal{L}^{-}$.
\item $x(z)$ has a simple zero at $z=0$, $x(1)=1$, $\lim_{z\to 0}\frac{x(z)}{z}>0$.
\item $y(z)$ has a simple zero at $z=\infty$, $y(1)=1$, $\lim_{|z|\to \infty}|zy(z)|>0$.
\item $x(z)$ (resp. $y(z)$) maps $\mathcal{L}^{+}$ (resp. $\mathcal{L}^{-}$) conformally to $S_{1}^{+}$ (resp. $S_{2}^{+}$).
\item For $z\in\mathcal{L}$, $(x^{+}(z),y^{-}(z))$ is a zero-pair of the kernel $U(x,y)=0$, where $x^{+}(z)=\lim_{t\to z,t\in\mathcal{L}^{+}}x(t)$, $y^{-}(z)=\lim_{t\to z,t\in\mathcal{L}^{-}}y(t)$.
\end{enumerate}

The solution of this boundary value problem depends on the position of $x=0$, $y=0$ with respect to $S_{1}$ and $S_{2}$; see \cite[II.3.10]{bv}. Without loss of generality, we only consider the first case, where $x=0\in S_{1}^{+},\,y=0\in S_{2}^{-}$. The others can be treated analogously (the second case is much more complicated; see \cite[II.3.12]{bv}). Following \cite[II.3.10-3.11]{bv}, and setting
\begin{displaymath}
x^{+}(z)=g(e^{\frac{1}{2}i\lambda(z)})e^{\frac{1}{2}i\lambda(z)},\,\,y^{-}(z)=g(e^{\frac{1}{2}i\lambda(z)})e^{-\frac{1}{2}i\lambda(z)},\,z\in\mathcal{L},
\end{displaymath} 
we obtain 
\begin{displaymath}
\begin{array}{rl}
x(z)=&ze^{\frac{1}{2\pi i}\int_{\zeta\in\mathcal{L}}[log\{\frac{g(e^{\frac{1}{2}i\lambda(z)})}{\sqrt{\zeta}}\}(\frac{\zeta+z}{\zeta-z}-\frac{\zeta+1}{\zeta-1})\frac{d\zeta}{\zeta}]},\,z\in\mathcal{L}^{+},\\
y(z)=&e^{-\frac{1}{2\pi i}\int_{\zeta\in\mathcal{L}}[log\{\frac{g(e^{\frac{1}{2}i\lambda(z)})}{\sqrt{\zeta}}\}(\frac{\zeta+z}{\zeta-z}-\frac{\zeta+1}{\zeta-1})\frac{d\zeta}{\zeta}]},\,z\in\mathcal{L}^{-}.
\end{array}
\end{displaymath}
The relation for the determination of $\mathcal{L}$ and $\lambda(z)$, $z\in\mathcal{L}$ is
\begin{equation}
e^{i\lambda(z)}=ze^{\frac{2}{2i\pi}\int_{\zeta\in\mathcal{L}}[log\{\frac{g(e^{\frac{1}{2}i\lambda(z)})}{\sqrt{\zeta}}\}(\frac{\zeta+z}{\zeta-z}-\frac{\zeta+1}{\zeta-1})\frac{d\zeta}{\zeta}]},\,z\in \mathcal{L}.
\label{int}
\end{equation}
An equivalent to \eqref{int} integral equations is
\begin{equation}
\frac{g(e^{\frac{1}{2}i\lambda(z)})}{\sqrt{z}}=e^{\frac{1}{4i\pi}\int_{\zeta\in\mathcal{L}}[(i\lambda(z)-log\{\zeta\})(\frac{\zeta+z}{\zeta-z}-\frac{\zeta+1}{\zeta-1})\frac{d\zeta}{\zeta}]},\,z\in \mathcal{L},\label{soo}
\end{equation}
with $\mathcal{L}=\{z:z=\rho(\phi)e^{i\phi},0\leq\phi\leq 2\pi\}$, $\theta(\phi)=\lambda(\rho(\phi)e^{i\phi})$. Separating real and imaginary parts in (\ref{soo}) we obtain two singular integral equations in the two unknowns functions $\rho(.)$, $\theta(.)$. Note that (\ref{soo}) may be regarded as a generalization of the Theodorsen's integral equation, see \cite[Section IV.2.3]{bv}.

Note finally that due to the maximum modulus principle \cite{neh}:
\begin{displaymath}
\begin{array}{lr}
|x(z)|<1,\,z\in\mathcal{L}^{+}\cup\mathcal{L},&
|y(z)|<1,\,z\in\mathcal{L}^{-}\cup\mathcal{L}.
\end{array}
\end{displaymath}
\subsection{Solution of the functional equation}
The following theorem is the main result of this section.
\begin{theorem}\label{thw}
Under stability condition \eqref{lm}, for $x\in S_{1}^{+}$, $y\in S_{2}^{+}$,
\begin{equation}
\begin{array}{rl}
\Pi_{0}(x,y)=&\frac{T(x,y)\Pi_{0}(0,0)}{\psi(x,y)-xy}\\
&\times\{\frac{K(x,y)}{2i\pi}\int_{\zeta\in\mathcal{L}}H(\zeta)[\frac{1}{x_{10}(x)-\zeta}-\frac{1}{y_{10}(x)-\zeta}]d\zeta+C(x,y)\},\vspace{2mm}\\
\Pi_{k}(x,y)=&F_{0,k}(x,y)\Pi_{0}(x,y),\,k=1,\ldots,N,\\
F_{0,k}(x,y)=&\frac{1}{det[\mathbf{L}(x,y)^{T}]}(cof \mathbf{L}(x,y)^{T})^{T}\mathbf{E}(x,y),
\end{array}\label{fins}
\end{equation}
where $x_{10}(.)$, $y_{10}(.)$ conformal mappings of $S_{1}$, $S_{2}$ onto the smooth and closed contour $\mathcal{L}$, respectively, $\Pi_{0}(0,0)$ as given in \eqref{lm}, and $H(z)=\frac{C(x^{+}(z),y^{-}(z))}{K(x^{+}(z),y^{-}(z))}$, $z\in\mathcal{L}$.
\end{theorem}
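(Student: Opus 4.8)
The plan is to turn the boundary relation \eqref{fgg} into a scalar \emph{additive} Riemann (jump) problem on the contour $\mathcal{L}$ and solve it by a Cauchy integral. First I would rewrite \eqref{fe} as $-\Pi_{0}(x,y)U(x,y)=T(x,y)\{K(x,y)[(1-w)\Pi_{0}(x,0)-w\Pi_{0}(0,y)]+C(x,y)\Pi_{0}(0,0)\}$, so that on every zerotuple of the kernel $U(x,y)=xy-\psi(x,y)$ the left-hand side vanishes and \eqref{fgg} holds identically. Substituting the parametrised zero-pair $(x^{+}(z),y^{-}(z))$, $z\in\mathcal{L}$, produced by the kernel analysis, and dividing by $K(x^{+}(z),y^{-}(z))$ (which I must first verify is non-vanishing on $\mathcal{L}$), gives the boundary identity $(1-w)\Pi_{0}(x^{+}(z),0)-w\Pi_{0}(0,y^{-}(z))=-H(z)\Pi_{0}(0,0)$, with $H(z)=C(x^{+}(z),y^{-}(z))/K(x^{+}(z),y^{-}(z))$ as in the statement.

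Next I would introduce the sectionally analytic function $\Phi$ defined by $\Phi(z)=(1-w)\Pi_{0}(x(z),0)$ for $z\in\mathcal{L}^{+}$ and $\Phi(z)=w\Pi_{0}(0,y(z))$ for $z\in\mathcal{L}^{-}$. Because $x(z)$ maps $\mathcal{L}^{+}$ conformally into $S_{1}^{+}$ and $y(z)$ maps $\mathcal{L}^{-}$ into $S_{2}^{+}$, and both images lie strictly inside the unit disc by the maximum-modulus bounds $|x(z)|<1$, $|y(z)|<1$ recorded at the end of the kernel analysis, the compositions $z\mapsto\Pi_{0}(x(z),0)$ and $z\mapsto\Pi_{0}(0,y(z))$ are regular in $\mathcal{L}^{+}$, resp.\ $\mathcal{L}^{-}$, and continuous up to $\mathcal{L}$. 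The boundary identity then reads precisely as the jump $\Phi^{+}(z)-\Phi^{-}(z)=-H(z)\Pi_{0}(0,0)$. This is the crucial reduction: the constant coefficients $(1-w)$ and $-w$ are absorbed into the two branches of $\Phi$, converting \eqref{fgg} into a plain additive Riemann problem with prescribed jump.

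I would then solve the jump problem by the Sokhotski--Plemelj formula. Since $\Phi$ is bounded and $y(z)$ has a simple zero at $z=\infty$, one has $\Phi(\infty)=w\Pi_{0}(0,y(\infty))=w\Pi_{0}(0,0)$; subtracting the Cauchy integral leaves an entire bounded function, which a Liouville argument forces to be the constant $w\Pi_{0}(0,0)$, so that $\Phi(z)=w\Pi_{0}(0,0)-\frac{\Pi_{0}(0,0)}{2i\pi}\int_{\mathcal{L}}\frac{H(\zeta)}{\zeta-z}\,d\zeta$. Evaluating at $z=x_{10}(x)\in\mathcal{L}^{+}$ and at $z=y_{10}(y)\in\mathcal{L}^{-}$, where $x_{10},y_{10}$ are the inverse conformal maps of $S_{1},S_{2}$ onto $\mathcal{L}$, recovers $(1-w)\Pi_{0}(x,0)$ and $w\Pi_{0}(0,y)$; subtracting yields $(1-w)\Pi_{0}(x,0)-w\Pi_{0}(0,y)=\frac{\Pi_{0}(0,0)}{2i\pi}\int_{\mathcal{L}}H(\zeta)\bigl[\frac{1}{x_{10}(x)-\zeta}-\frac{1}{y_{10}(y)-\zeta}\bigr]d\zeta$ for $x\in S_{1}^{+}$, $y\in S_{2}^{+}$. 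Inserting this into \eqref{fe} and recalling $\psi=RT$ gives the stated formula for $\Pi_{0}(x,y)$, with $\Pi_{0}(0,0)$ supplied by \eqref{lm} and the remaining $\Pi_{k}$ obtained from \eqref{a2}.

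The main obstacle I anticipate is not the formal derivation but its rigorous justification. One must guarantee that $\mathcal{L}$ is a simple smooth contour (only checked numerically here for small $w$, under the case $x=0\in S_{1}^{+}$, $y=0\in S_{2}^{-}$ to which we restrict), that $H$ is H\"older-continuous on $\mathcal{L}$ so the Sokhotski--Plemelj formulas apply, and that $\Phi$ is genuinely bounded near $\mathcal{L}$ with no spurious singularities, so the entire part is exactly $w\Pi_{0}(0,0)$. A secondary technical point is confirming $K(x^{+}(z),y^{-}(z))\neq0$ on $\mathcal{L}$ and that the resulting $\Pi_{0}$ is single-valued; the identity is established on $S_{1}^{+}\times S_{2}^{+}$ as in the statement, and its extension to the full unit disc, if required later, proceeds by analytic continuation through \eqref{fe}.
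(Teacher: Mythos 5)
Your proposal is correct and follows essentially the same route as the paper: restrict \eqref{fe} to the zero-pairs $(x^{+}(z),y^{-}(z))$, obtain the boundary relation, solve the resulting index-zero Riemann problem by a Cauchy integral (your absorption of the constants $1-w$ and $w$ into a single sectionally analytic $\Phi$ with jump $-H(z)\Pi_{0}(0,0)$ is just a slightly more explicit packaging of the paper's ``non-homogeneous Riemann boundary value problem'' with constant coefficient $w/(1-w)$), fix the additive constant via $y(\infty)=0$, pull back through the conformal maps $x_{10},y_{10}$, and substitute into \eqref{fe}. The technical caveats you flag (simplicity of $\mathcal{L}$, H\"older continuity of $H$, non-vanishing of $K$ on the contour) are exactly the ones the paper acknowledges.
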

\begin{proof}
Since $(x^{+}(z),y^{-}(z))$, $z\in\mathcal{L}$ is a zero pair of the kernel, it should hold for $z\in\mathcal{L}$
\begin{equation}
\begin{array}{r}
K(x^{+}(z),y^{-}(z))[(1-w)\Pi_{0}(x^{+}(z),0)-w\Pi_{0}(0,y^{-}(z))]\\+C(x^{+}(z),y^{-}(z))\Pi_{0}(0,0)=0,
\end{array}
\end{equation}
or equivalently
\begin{equation}
\Pi_{0}(x^{+}(z),0)=\frac{w}{1-w}\Pi(0,y^{-}(z))-\frac{\Pi_{0}(0,0)}{1-w}H(z).
\label{pp}\end{equation}
Note that $\frac{w}{1-w}$ never vanishes and thus $index_{z\in\mathcal{L}}(\frac{w}{1-w})=0$. Moreover, $\frac{w}{1-w}$ satisfies (trivially) the Holder condition on $\mathcal{L}$. The numerator and the denominator of $H(.)$ both satisfy the Holder condition on $\mathcal{L}$, and the denominator never vanishes on $\mathcal{L}$, except for $z=1$. 

Therefore, we have the following non-homogeneous Riemann boundary value problem: 
\begin{enumerate}
\item $\Pi_{0}(x(z),0)$ should be regular for $z\in\mathcal{L}^{+}$ and continuous in $\mathcal{L}^{+}\cup \mathcal{L}$, with $\lim_{\zeta\to z,\zeta\in\mathcal{L}^{+}}\Pi_{0}(x(z),0)=\Pi_{0}(x^{+}(z),0)$, $z\in \mathcal{L}$,
\item $\Pi_{0}(0,y(z))$ should be regular for $z\in\mathcal{L}^{-}$ and continuous in $\mathcal{L}^{-}\cup \mathcal{L}$, with $\lim_{\zeta\to z,\zeta\in\mathcal{L}^{-}}\Pi_{0}(0,y(z))=\Pi_{0}(0,y^{-}(z))$, $z\in \mathcal{L}$,
\item For $z\in\mathcal{L}$, the boundary condition \eqref{pp} is satisfied.
\end{enumerate}
The solution of the above presented boundary value problem \cite{ga} is given by:
\begin{equation}
\begin{array}{rl}
(1-w)\Pi_{0}(x(z),0)=&\frac{\Pi_{0}(0,0)}{2i\pi}\int_{\zeta\in\mathcal{L}}H(\zeta)\frac{d\zeta}{z-\zeta}+w\Pi_{0}(0,0),\,z\in\mathcal{L}^{+},\\
w\Pi_{0}(0,y(z))=&\frac{\Pi_{0}(0,0)}{2i\pi}\int_{\zeta\in\mathcal{L}}H(\zeta)\frac{d\zeta}{z-\zeta}+w\Pi_{0}(0,0),\,z\in\mathcal{L}^{-},\end{array}
\end{equation}
where $\Pi_{0}(0,0)$ is given in \eqref{lm}. Denote by,
\begin{displaymath}
z=x_{10}(x),\,x\in S_{1}^{+},\,z=y_{10}(y),\,y\in S_{2}^{+},
\end{displaymath}
the inverse mappings, i.e., the conformal mappings of $S_{j}^{+}$ onto $\mathcal{L}$, $j=1,2,$ respectively. Since $S_{1}$, $S_{2}$, $\mathcal{L}$ are smooth, the theorem of corresponding boundaries implies that $x_{10}(.)$ maps $S_{1}$ onto $\mathcal{L}$, and $y_{10}(.)$ maps $S_{2}$ onto $\mathcal{L}$. Thus, we have
\begin{equation}
\begin{array}{rl}
(1-w)\Pi_{0}(x,0)=&\frac{\Pi_{0}(0,0)}{2i\pi}\int_{\zeta\in\mathcal{L}}H(\zeta)\frac{d\zeta}{x_{10}(x)-\zeta}+w\Pi_{0}(0,0),\,x\in S_{1}^{+},\\
w\Pi_{0}(0,y)=&\frac{\Pi_{0}(0,0)}{2i\pi}\int_{\zeta\in\mathcal{L}}H(\zeta)\frac{d\zeta}{y_{10}(y)-\zeta}+w\Pi_{0}(0,0),\,y\in S_{2}^{+},\end{array}\label{rbv}
\end{equation}
Substituting \eqref{rbv} in \eqref{fe}, it follows for $x\in S_{1}^{+}$, $y\in S_{2}^{+}$,
\begin{equation}
\begin{array}{rl}
\Pi_{0}(x,y)=&\frac{T(x,y)\Pi_{0}(0,0)}{\psi(x,y)-xy}\\
&\times\{\frac{K(x,y)}{2i\pi}\int_{\zeta\in\mathcal{L}}H(\zeta)[\frac{1}{x_{10}(x)-\zeta}-\frac{1}{y_{10}(y)-\zeta}]d\zeta+C(x,y)\}.
\end{array}\label{fin}
\end{equation}
By using analytic continuation arguments we can also obtain an expression for the $\Pi_{0}(x,y)$, for $|x|\leq 1$, $|y|\leq 1$. Using now \eqref{a2}, we obtain expressions for $\Pi_{k}(x,y)$, $k=1,\ldots,N$.
\end{proof}
\begin{remark}
Note that the analysis performed in Section \ref{sec:rbv} is general enough to be applied in the case where at least one of the following conditions are satisfied,  
\begin{displaymath}
\begin{array}{rl}
\frac{q_{-1,j}(0)}{q_{-1,j}^{(1)}(0)}+\frac{q_{i,-1}(0)}{q_{i,-1}^{(2)}(0)}\neq&1,\,i,j\in\mathbb{H}^{+}.
\end{array}
\end{displaymath}
However, some extra technical difficulties regarding the investigation of the properties of the curves $S_{1}$, $S_{2}$ will arise. Moreover, the resulting Riemann boundary value problem will be far more complicated, where at the same time the computation of its index is a challenging task. 
\end{remark}

\section{Explicit expressions of the moments for a special case}\label{symm}
In this section we provide explicit expressions for the moments of $\{Z(t)\}$ when we consider the \textit{symmetrical assumption} when the background state $J(t)=0$. Moreover, to enhance the readability we also focus on the QBD version of $\{Z(t), t\geq0\}$. More precisely, we consider the case where the increments of $\{X_{j}(t)\}$, $j=1,2,$ are in $\mathbb{H}=\{-1,0,1\}$ when $J(t)=0$, and in $\mathbb{H}^{+}=\{0,1\}$ when $J(t)=1,\ldots,N$. Note here that the analysis that follows can be also considered in the general case at a cost of more complicated expressions. Symmetry implies 
\begin{displaymath}
\begin{array}{c}
q_{-1,j}(0)=q_{j,-1}(0),\,q_{-1,j}^{(1)}(0)=q_{j,-1}^{(2)}(0),\,j\in\mathbb{H}^{+},\\
q_{0,1}(0)=q_{1,0}(0),\,\theta_{0,k}=\theta,\,A_{0,k}(x,y)=A(x,y),\,k=1,\ldots,N.
\end{array}
\end{displaymath} 
and $w=1/2$, $\Pi_{0}(1,0)=\Pi_{0}(0,1)$, $\Pi_{0}^{(1)}(1,1)=\Pi_{0}^{(2)}(1,1)$, where $\Pi_{0}^{(k)}(1,1)$, $k=1,2$ the derivatives of $\Pi_{0}(x,y)$ with respect to $x$ and $y$, respectively, at $(1,1)$. Note that under symmetry assumptions, the stability condition \eqref{lm} can be written after some algebra as
\begin{displaymath}
\rho:=\frac{T(1,1)(q_{1,0}(0)+q_{1,1}(0))}{T(1,1)(N\theta+q_{-1,0}(0))+N\theta T^{(1)}(1,1)-1}<1,
\end{displaymath}
and
\begin{displaymath}
\begin{array}{l}
\Pi_{0}(0,0)=\frac{2\Pi_{0}(1,1)[N\theta(T^{(1)}(1,1)+T(1,1))+T(1,1)(q_{-1,0}(0)-q_{1,0}(0)-q_{1,1}(0))-1]}{T(1,1)q_{0,-1}^{(2)}(0)},
\end{array}
\end{displaymath}
where $T^{(1)}(1,1)$ the derivative of $T(x,y)$ with respect to $x$ at $(1,1)$. Let $M_{m}=E(X_{m})$, $m=1,2,$ the first moment of the stationary distribution of $\{X_{m}(t);t\geq0\}$. The following theorem provides the main result of this section.
\begin{theorem}\label{th2}
When $\rho<1$,
\begin{equation}
\begin{array}{rl}
M_{1}=&\Pi_{0}(1,1)\sum_{k=1}^{N}\frac{\partial}{\partial x}F_{0,k}(x,1)|_{x=1}+M(1+\sum_{k=1}^{N}F_{0,k}(1,1)),\vspace{2mm}\\
M_{2}=&\Pi_{0}(1,1)\sum_{k=1}^{N}\frac{\partial}{\partial y}F_{0,k}(1,y)|_{y=1}+M(1+\sum_{k=1}^{N}F_{0,k}(1,1))
\end{array}\label{sym0}
\end{equation}
where
\begin{equation}
\begin{array}{rl}
M:=&\frac{\Pi_{0}(0,0)q_{0,-1}^{(2)}(0)[2T^{(1)}(1,1)+T(1,1)]+(S/2)\Pi_{0}(1,1)}{2[N\theta(T^{(1)}(1,1)+T(1,1))+T(1,1)(q_{-1,0}(0)-q_{1,0}(0)-q_{1,1}(0))-1]},\vspace{2mm}\\
S:=&2\left[2T^{(1)}(1,1)(N\theta+q_{-1,0}(0)-q_{1,0}(0)-q_{1,1}(0))+T(1,1) \right.\\&\left.\times(N\theta+2q_{-1,0}(0)-4q_{1,0}(0)-5q_{1,1}(0))\right]+N\theta\frac{\partial^{2}}{\partial x^{2}}T(x,x)|_{x=1},
\end{array}\label{sym2}
\end{equation}
and $\Pi_{0}(1,1)$ as given in \eqref{prob}.
\end{theorem}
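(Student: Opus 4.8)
\noindent\emph{Proof sketch.} The plan is to reduce both $M_1$ and $M_2$ to the single quantity $M:=\Pi_0^{(1)}(1,1)$, and then to evaluate $M$ by restricting the functional equation \eqref{fe} to the diagonal $y=x$. Using $\Pi_k(x,y)=F_{0,k}(x,y)\Pi_0(x,y)$ from \eqref{a2}, I would write $\sum_{k=0}^{N}\Pi_k(x,1)=\Pi_0(x,1)\big(1+\sum_{k=1}^{N}F_{0,k}(x,1)\big)$, differentiate at $x=1$, and invoke $\Pi_0(1,1)=\big(1+\sum_{k=1}^{N}F_{0,k}(1,1)\big)^{-1}$ from \eqref{prob}. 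The product rule then yields the two displays in \eqref{sym0} directly, the coefficient of $M$ being $1+\sum_{k=1}^{N}F_{0,k}(1,1)$. The same $M$ governs $M_2$ because symmetry forces $\Pi_0(x,y)=\Pi_0(y,x)$, whence $\Pi_0^{(1)}(1,1)=\Pi_0^{(2)}(1,1)=M$. Thus the whole theorem reduces to producing the explicit value of $M$ in \eqref{sym2}.

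The crucial simplification is that on the diagonal the boundary term carried by $K$ disappears. With $w=1/2$ the right-hand side of \eqref{fe} contains the factor $\tfrac12 K(x,y)\big[\Pi_0(x,0)-\Pi_0(0,y)\big]$, and the symmetry $\Pi_0(x,y)=\Pi_0(y,x)$ gives $\Pi_0(x,0)=\Pi_0(0,x)$ for every $x$. Setting $y=x$ therefore collapses \eqref{fe} to the purely scalar relation $\Pi_0(x,x)\,[R(x,x)T(x,x)-x^2]=T(x,x)C(x,x)\Pi_0(0,0)$, in which no unknown boundary function survives.

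Write $h(x):=\Pi_0(x,x)$, $\delta(x):=R(x,x)T(x,x)-x^2$ and $\gamma(x):=T(x,x)C(x,x)\Pi_0(0,0)$, so that $h(x)\delta(x)=\gamma(x)$. Both factors vanish at $x=1$: $C(1,1)=0$ gives $\gamma(1)=0$, while $R(1,1)T(1,1)=1$ gives $\delta(1)=0$ (here $R(1,1)=D_0(1,1)=N\theta$ and $T(1,1)=1/(N\theta)$, the latter following from $A_{k,0}(1,1)=1$ together with the phase balance $N\theta=\sum_{k=1}^{N}\theta_{k,0}F_{0,k}(1,1)$). Differentiating $h\delta=\gamma$ once at $x=1$ recovers $\Pi_0(1,1)=\gamma'(1)/\delta'(1)$ and, written out, reproduces the expression for $\Pi_0(0,0)$ displayed just before the theorem. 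Differentiating a second time and using $\delta(1)=0$ gives $2h'(1)\delta'(1)+\Pi_0(1,1)\delta''(1)=\gamma''(1)$; since $h'(1)=\Pi_0^{(1)}(1,1)+\Pi_0^{(2)}(1,1)=2M$, this isolates $M=\big(\gamma''(1)-\Pi_0(1,1)\delta''(1)\big)/\big(4\delta'(1)\big)$.

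It then remains to substitute the explicit QBD forms of $R$, $C$, $T$ under the symmetry relations ($q_{-1,j}(0)=q_{j,-1}(0)$, $q^{(1)}_{-1,j}(0)=q^{(2)}_{j,-1}(0)$, $q_{0,1}(0)=q_{1,0}(0)$, $\theta_{0,k}=\theta$, $A_{0,k}=A$) into $\delta'(1)$, $\delta''(1)$, $\gamma''(1)$ and to simplify to \eqref{sym2}; the term $\tfrac{\partial^{2}}{\partial x^{2}}T(x,x)|_{x=1}$ appearing in $S$ is precisely what survives from the second diagonal derivative of the products $R(x,x)T(x,x)$ and $T(x,x)C(x,x)$. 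I expect the main obstacle to be exactly this last bookkeeping: evaluating $\gamma''(1)$ and $\delta''(1)$ needs the first and second diagonal derivatives of $R$, $C$ (and, through $\delta'(1)$, of the whole kernel) at $(1,1)$, each a finite sum over one-step rates, and one must track via the symmetry identities which cross terms cancel. The reduction is clean; assembling $S$ in the compact form \eqref{sym2} is where the genuine computation lies, with the first-order identity serving as a consistency check since it must return the stated $\Pi_0(0,0)$.
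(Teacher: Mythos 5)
Your proposal is correct and follows essentially the same route as the paper: reduce $M_{1},M_{2}$ to $M=\Pi_{0}^{(1)}(1,1)$ via the product rule on $\Pi_{k}=F_{0,k}\Pi_{0}$, restrict \eqref{fe} to the diagonal $y=x$ where the unknown boundary functions drop out (the paper notes $K(x,x)=0$ under symmetry, which is equivalent to your $\Pi_{0}(x,0)=\Pi_{0}(0,x)$ argument), and extract $M$ by differentiating the resulting scalar relation at $x=1$. Your ``differentiate $h\delta=\gamma$ twice'' identity is exactly the paper's ``divide by $x-1$, differentiate once and apply L'H\^opital'' step in disguise, so the two computations coincide.
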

\begin{proof}
Note that
\begin{equation}
\begin{array}{rl}
M_{1}:=&\sum_{k=0}^{N}\frac{\partial}{\partial x}\Pi_{k}(x,1)|_{x=1}=\sum_{k=0}^{N}\Pi_{k}^{(1)}(1,1)\\
=&\Pi_{0}^{(1)}(1,1)+\sum_{k=1}^{N}[\Pi_{0}(1,1)\frac{\partial}{\partial x}F_{0,k}(x,1)|_{x=1}+F_{0,k}(1,1)\Pi_{0}^{(1)}(1,1)]\\
=&\Pi_{0}^{(1)}(1,1)[1+\sum_{k=1}^{N}F_{0,k}(1,1)]+\Pi_{0}(1,1)\sum_{k=1}^{N}\frac{\partial}{\partial x}F_{0,k}(x,1)|_{x=1}.
\end{array}\label{sym3}
\end{equation}

A similar expression can also be derived for $E(X_{2})$. Thus, we only need to find an expression for the $\Pi_{0}^{(1)}(1,1)$. Setting $x=y$, we realise that $K(x,x)=0$, and thus, \eqref{fe} is rewritten as
\begin{equation}
\Pi_{0}(x,x)\frac{T(x,x)R(x,x)-x^{2}}{x-1}=T(x,x)xq_{0,-1}^{(2)}(0)\Pi_{0}(0,0).
\label{sym1}
\end{equation}
By differentiating \eqref{sym1} with respect to $x$ at $x=1$, applying once the L'Hospital rule, and having in mind the symmetry condition at phase $0$, we obtain
\begin{displaymath}
\Pi_{0}^{(1)}(1,1)=\Pi_{0}^{(2)}(1,1)=M,
\end{displaymath}
as given in the first in \eqref{sym2}. Substituting back in \eqref{sym3} we obtain the first in \eqref{sym0}. Similarly we obtain the expression for $M_{2}$.
\end{proof}
\section{Application: A retrial system with a finite capacity priority line and two coupled orbit queues}\label{appl}
In this section, we show that this methodological approach can be also used to provide the stationary analysis of a single server retrial system with a finite priority queue of capacity $N$, and two coupled orbits of infinite capacity. We assume that three classes of jobs arrive at the system, say $P_{k}$, $k=0,1,2$.

$P_{k}$ jobs arrive according to a Poisson process with rate $\lambda_{k}$. Arriving jobs that find the server idle, they get service immediately. If an arriving $P_{0}$ job finds the server busy, waits in an ordinary queue provided that there is available space. If an arriving $P_{0}$ job finds the queue fully occupied is considered lost. 

An arriving $P_{k}$, $k=1,2$ job that finds the server busy, it is routed in orbit queue $k$. Orbiting jobs of either type retry independently to connect with the server according to a state-dependent constant retrial policy. More precisely, when both orbit queues are non-empty, the orbit queue 1 (resp. 2) attempts to re-dispatch a blocked type $P_{1}$ job (resp. $P_{2}$) to the service
station after an exponentially distributed time with rate $w\alpha_{1}$ (resp. $\bar{w}\alpha_{2}$, where $\bar{w}=1-w$). If orbit queue 1 (resp. 2)
is the only non-empty, then, it changes its re-dispatch rate to $\alpha_{1}$ (resp. $\alpha_{2}$). Service time is independent of the job type of and it is exponentially distributed with rate $\mu$.

Under such a setting, the level process $\{(X_{1}(t),X_{2}(t))\}$ taking values in $\mathbb{Z}^{+}_{2}$, corresponds to the orbit queue lengths at time $t$, while the phase process $\{J(t)\}$ corresponds to the number of jobs in the priority queue and in the service station at time $t$, taking value in $S_{0}=\{0,1,\ldots,N\}$. Note that in such a case, $\{Z(t)\}$ corresponds to a two-dimensional quasi birth-death (QBD) process with a similar structure as the one discussed in Section \ref{sec:mod}.

Writing the balance equations and using the generating function approach we come up with the system of equations
\begin{equation}
\begin{array}{rl}
\widehat{\alpha}\Pi_{0}(x,y)-\mu\Pi_{1}(x,y)=&(\alpha_{2}-\alpha_{1})[\bar{w}\Pi_{0}(x,0)-w\Pi_{0}(0,y)]\\+(\alpha_{1}w+\alpha_{2}\bar{w})\Pi_{0}(0,0),
\end{array}\label{g1}
\end{equation}
\begin{equation}
\begin{array}{l}
\Pi_{0}(x,y)u_{0}(x,y)-(\lambda+\mu)\Pi_{1}(x,y)+\mu xy\Pi_{2}(x,y)\\
=(\alpha_{2}x-\alpha_{1}y)[w\Pi_{0}(x,0)-\bar{w}\Pi_{0}(0,y)]+(\alpha_{1}\bar{w}y+\alpha_{2}wx)\Pi_{0}(0,0),
\end{array}\label{g2}
\end{equation}
\begin{equation}
\begin{array}{rl}
u_{1}(x,y)\Pi_{k}(x,y)=&\lambda_{0}\Pi_{k-1}(x,y)+\mu\Pi_{k+1}(x,y),\,k=2,\ldots,N-1,\vspace{2mm}\\
u_{2}(x,y)\Pi_{N}(x,y)=&\lambda_{0}\Pi_{n-1}(x,y),
\end{array}\label{g3}
\end{equation}
where, $\lambda=\lambda_{0}+\lambda_{1}+\lambda_{2}$, $\widehat{\alpha}=\lambda+\alpha_{1}w+\alpha_{2}(1-w)$, $u_{0}(x,y)=\lambda xy+w\alpha_{1}y+(1-w)\alpha_{2}x$, $u_{1}(x,y)=\mu+\lambda_{0}+\lambda_{1}(1-x)+\lambda_{2}(1-y)$, $u_{2}(x,y)=\mu+\lambda_{1}(1-x)+\lambda_{2}(1-y)$.

Equations \eqref{g1}-\eqref{g3} give rise to the following \textit{matrix} functional equation, which has the same form as the one in \eqref{matrix}, where now,
\begin{displaymath}
\begin{array}{rl}
\mathbf{Q}(x,y)=&\begin{pmatrix}
\widehat{\alpha}&u_{0}(x,y)&0&\\
-\mu&-(\lambda+\mu)xy&\lambda_{0}&0\\
0&\mu xy&-u_{1}(x,y)&\lambda_{0}&0\\
0&0&\mu&-u_{1}(x,y)&\lambda_{0}&0&\\
\vdots&\vdots&\vdots&\ddots&\ddots\\
0&0&\ldots&&\ldots&\mu&-u_{1}(x,y)&\lambda_{0}\\
0&0&\ldots&&\ldots&&\mu&-u_{2}(x,y)
\end{pmatrix},\vspace{2mm}\\
\mathbf{T}_{1}(x,y)=&\begin{pmatrix}
\alpha_{2}-\alpha_{1}&\alpha_{2}x-\alpha_{1}y&\ldots&0\\
0&0&\ldots&0\\
\vdots&\vdots&\ddots&\vdots\\
0&0&\ldots&0\\
\end{pmatrix},\\
\mathbf{T}_{2}(x,y)=&\begin{pmatrix}
\alpha_{1}\bar{w}+\alpha_{2}w&\alpha_{1}\bar{w}y+\alpha_{2}wx&\ldots&0\\
0&0&\ldots&0\\
\vdots&\vdots&\ddots&\vdots\\
0&0&\ldots&0\\
\end{pmatrix},
\end{array}
\end{displaymath}
Note here that compared with the general framework discussed in Section \ref{sec:fun}, the matrix $\mathbf{Q}(x,y)$ is triangular due to the fact that according to the model description, we allow transitions only among neighbour states of the phase process. Moreover, the structure of matrices $\mathbf{T}_{1}(x,y)$, $\mathbf{T}_{2}(x,y)$ are slightly different compared with those in Section \ref{sec:fun}. 

Using \eqref{g3}, $\Pi_{k}(x,y)$, $k=2,\ldots,N$ is written in terms of $\Pi_{1}(x,y)$. This procedure yields after some algebra
\begin{equation}
\Pi_{k}(x,y)=s_{k-1}(x,y)\Pi_{k-1}(x,y),\,k=2,3,\ldots,N,\label{dlp}
\end{equation}
where $s_{N}(x,y)=0$ and for $k=1,\ldots,N-1,$ we can recursively obtain
\begin{equation}
s_{k}(x,y)=\frac{\lambda_{0}}{u_{2}(x,y)+\lambda_{0}1_{\{k\neq N-1\}}-\mu s_{k+1}(x,y)}.\label{lmk}
\end{equation}
Substituting \eqref{dlp} in \eqref{g2}, and then using \eqref{g1}, we obtain
\begin{equation}
\begin{array}{rl}
\tilde{R}(x,y)\Pi_{0}(x,y)=\tilde{K}(x,y)[\bar{w}\Pi_{0}(x,0)-w\Pi_{0}(0,y)]+\tilde{C}(x,y)\Pi_{0}(0,0),
\end{array}\label{fe1}
\end{equation}
where now,
\begin{displaymath}
\begin{array}{rl}
\tilde{R}(x,y)=&\widehat{\alpha}xy[\mu(1+s_{1}(x,y))-u_{1}(x,y)]+w\alpha_{1}\mu y(1-x)+\bar{w}\alpha_{2}\mu x(1-y),\\
\tilde{K}(x,y)=&\alpha_{2}\mu x(1-y)-\alpha_{1}\mu y(1-x)+(\alpha_{2}-\alpha_{1})xy[\mu(1+s_{1}(x,y))-u_{1}(x,y)],\\
\tilde{C}(x,y)=&\bar{w}\alpha_{1}y[(1-x)(\mu-\lambda_{1}x)+x(\mu(1+s_{1}(x,y))-u_{1}(x,y))]\\
&+w\alpha_{2}x[(1-y)(\mu-\lambda_{2}y)+y(\mu(1+s_{1}(x,y))-u_{1}(x,y))].
\end{array}
\end{displaymath}

Note that \eqref{fe1} has exactly the same form as the one given in \eqref{fe}. Thus, using the PSA or the theory of boundary value problems presented in sections \ref{psa} and \ref{sec:rbv}, respectively, we are able to solve it, and obtain expressions for the $\Pi_{0}(x,y)$. Then, equation \eqref{g1} can be used to obtain expressions for the $\Pi_{1}(x,y)$. Having obtain $\Pi_{1}(x,y)$, the rest of the unknown functions, $\Pi_{k}(x,y)$ are obtained recursively by using \eqref{dlp}.

We conclude this section by obtaining the stationary probabilities of phase process, and the stability condition. Note that starting from \eqref{dlp}, we have
\begin{displaymath}
\Pi_{k}(x,y)=\Pi_{1}(x,y)\prod_{j=1}^{k-1}s_{j}(x,y),\,k=2,\ldots,N.
\end{displaymath}
Then, using \eqref{lmk}, and setting $(x,y)=(1,1)$, we realize that $s_{k}(1,1)=\frac{\lambda_{0}}{\mu}$, $k=1,\ldots,N-1$. Therefore, the normalization condition reads
\begin{equation}
\begin{array}{rl}
1=&\Pi_{0}(1,1)+\Pi_{1}(1,1)\sum_{k=0}^{N-1}(\frac{\lambda_{0}}{\mu})^{k}=\Pi_{0}(1,1)+\Pi_{1}(1,1)\frac{1-\rho^{N}_{0}}{1-\rho_{0}},
\end{array}\label{norm}
\end{equation}
where $\rho_{j}=\lambda_{j}/\mu$, $j=0,1,2.$ For each $i = 0, 1,...$, consider the vertical cut between the states $\{X_{1} = i, J = 1\}$ and $\{X_{1} = i + 1,J = 0\}$. Then, if $\pi^{(k)}_{i,.}=\sum_{j=0}^{\infty}\pi^{(k)}_{i,j}$, $k\in S_{0}$ we have
\begin{displaymath}
\begin{array}{rl}
\lambda_{1}\pi^{(1)}_{i,.}=&\alpha_{1}\pi^{(0)}_{i+1,0}+w\alpha_{1}\sum_{j=1}^{\infty}\pi^{(0)}_{i+1,j}\alpha_{1}\pi^{(0)}_{i+1,0}+w\alpha_{1}[\pi^{(0)}_{i+1,.}-\pi^{(0)}_{i+1,0}]\\
=&\bar{w}\alpha_{1}\pi^{(0)}_{i+1,0}+w\alpha_{1}\pi^{(0)}_{i+1,.}.
\end{array}
\end{displaymath}
Summing for all $i\geq 0$ yields,
\begin{equation}
\begin{array}{c}
\lambda_{1}\Pi_{1}(1,1)=\bar{w}\alpha_{1}[\Pi_{0}(1,0)-\Pi_{0}(0,0)]+w\alpha_{1}[\Pi_{0}(1,1)-\Pi_{0}(0,1)].
\end{array}\label{c1}
\end{equation}
Similar arguments can be used to obtain
\begin{equation}
\begin{array}{c}
\lambda_{2}\Pi_{1}(1,1)=w\alpha_{2}[\Pi_{0}(0,1)-\Pi_{0}(0,0)]+\bar{w}\alpha_{2}[\Pi_{0}(1,1)-\Pi_{0}(1,0)].
\end{array}\label{c2}
\end{equation}
Summing \eqref{c1}, \eqref{c2}, and \eqref{g1} after setting $(x,y)=(1,1)$ yields
\begin{displaymath}
\lambda\Pi_{0}(1,1)=(\mu-\lambda_{1}-\lambda_{2})\Pi_{1}(1,1).
\end{displaymath}
Using the normalization condition \eqref{norm}, the last expression is rewritten after some algebra as
\begin{displaymath}
\Pi_{1}(1,1)=\rho\frac{1-\rho_{0}}{\rho_{0}[1-\rho(1-\rho^{N-1}_{0})]},
\end{displaymath} 
where $\rho=\rho_{0}+\rho_{1}+\rho_{2}$. Thus,
\begin{displaymath}
\begin{array}{rl}
\Pi_{0}(1,1)=&1-\Pi_{1}(1,1)\frac{1-\rho^{N}_{0}}{1-\rho_{0}}=1-\rho\frac{1-\rho^{N}_{0}}{\rho_{0}[1-\rho(1-\rho^{N-1}_{0})]},\\
\Pi_{k}(1,1)=&\rho^{k-1}_{0}\Pi_{1}(1,1),\,k=2,3,\ldots,N.
\end{array}
\end{displaymath}
It remains to obtain $\Pi_{0}(0,0)$, i.e., the probability of an empty system. Multiply \eqref{c1} with $\alpha_{2}$, and \eqref{c2} with $\alpha_{1}$. By summing the resulting equations we obtain
\begin{displaymath}
\Pi_{0}(0,0)=\Pi_{0}(1,1)-\Pi_{1}(1,1)(\frac{\lambda_{1}}{\alpha_{1}}+\frac{\lambda_{2}}{\alpha_{2}}).
\end{displaymath}
Substituting $\Pi_{0}(1,1)$ and $\Pi_{1}(1,1)$ yields,
\begin{displaymath}
\begin{array}{rl}
\Pi_{0}(0,0)=&1-\frac{\rho}{\rho_{0}[1-\rho(1-\rho^{N-1}_{0})]}\left(1-\rho^{N}_{0}+(1-\rho_{0})(\frac{\lambda_{1}}{\alpha_{1}}+\frac{\lambda_{2}}{\alpha_{2}})\right)\\
=&1-\widehat{\rho}>0,
\end{array}
\end{displaymath}
where 
\begin{displaymath}
\widehat{\rho}=\frac{\rho}{\rho_{0}[1-\rho(1-\rho^{N-1}_{0})]}\left(1-\rho^{N}_{0}+(1-\rho_{0})(\frac{\lambda_{1}}{\alpha_{1}}+\frac{\lambda_{2}}{\alpha_{2}})\right)<1,
\end{displaymath}
is the stability condition for our system.
\section{Numerical results}\label{sec:num}
In the following, we present some numerical illustration of the theoretical results presented in Section \ref{psa}. In particular, we consider a two-node queueing network with coupled processors and two types of service interruption. Two classes of jobs arrive according to independent Poisson processes. Class $P_{i}$, $i=1,2$ is routed to queue $i$. Each job at queue $i$ requires exponentially distributed service time with rate $\nu_{i}$. The network is operating for an exponentially distributed time with rate $\theta_{0,j}=\gamma_{j}$, and then switches to the fail mode $j$, $j=1,2$. In failed mode $j$, it stays for an exponentially distributed time and then, it switches to operating mode 0 with rate $\theta_{j,0}=\tau_{j}$. When the network is in a failed mode of either type it cannot provide service.

Jobs arrival rates are $\lambda_{i}^{(j)}$, $i=1,2$, $j=0,1,2$, i.e., depend on the type of the mode (either operating or failed). When the network is in operational mode, and both queues are non-empty, queue 1 serves at a rate $w\nu_{1}$, and queue 2 at a rate $(1-w)\nu_{2}$. If only one queue is non-empty it serves at full capacity, i.e., with rate $\nu_{i}$. Upon receiving service at queue 1 (resp. 2) the job is either routed to queue 2 (resp. 1) with probability $r_{12}$ (resp. $r_{21}$), or leaves the network with probability $1-r_{12}$ (resp. $1-r_{21}$). For ease of computations we assume $a_{0,0}^{(k,m)}=1$, $k\neq m$, i.e., the change in mode does not change the number of jobs in queues. Thus, in such a case, $\{Z(t)\}$ is a QBD process, i.e., the increments of $\{X(t)\}$ are in $\mathbb{H}=\{-1,0,1\}$ when $J(t)=0$, and in $\mathbb{H}^{+}=\{0,1\}$ when $J(t)=1,2$.

Note also that in such a case, the stability condition \eqref{lm} reads \footnote{Note that $\frac{\tau_{1}\tau_{2}}{\tau_{1}\tau_{2}+\gamma_{1}\tau_{2}+\gamma_{2}\tau_{1}}\left(\frac{\Lambda^{(0)}_{1}}{\nu_{1}}+\frac{\Lambda^{(0)}_{2}}{\nu_{2}}\right)$ (resp. $\frac{\gamma_{k}\tau_{(k+1)mod 2}}{\tau_{1}\tau_{2}+\gamma_{1}\tau_{2}+\gamma_{2}\tau_{1}}\left(\frac{\Lambda^{(k)}_{1}}{\nu_{1}}+\frac{\Lambda^{(k)}_{2}}{\nu_{2}}\right)$) refers to the amount of work that arrive at the system per time unit when the network is in the operating mode (resp. in the failed mode $k=1,2$), while a job can depart from the network only when it is in the operating mode, i.e., with probability $\frac{\tau_{1}\tau_{2}}{\tau_{1}\tau_{2}+\gamma_{1}\tau_{2}+\gamma_{2}\tau_{1}}$.}
\begin{equation}
\begin{array}{l}
\frac{\tau_{1}\tau_{2}}{\tau_{1}\tau_{2}+\gamma_{1}\tau_{2}+\gamma_{2}\tau_{1}}\left(\frac{\Lambda^{(0)}_{1}}{\nu_{1}}+\frac{\Lambda^{(0)}_{2}}{\nu_{2}}\right)+\frac{\gamma_{1}\tau_{2}}{\tau_{1}\tau_{2}+\gamma_{1}\tau_{2}+\gamma_{2}\tau_{1}}\left(\frac{\Lambda^{(1)}_{1}}{\nu_{1}}+\frac{\Lambda^{(1)}_{2}}{\nu_{2}}\right)\\+\frac{\tau_{1}\gamma_{2}}{\tau_{1}\tau_{2}+\gamma_{1}\tau_{2}+\gamma_{2}\tau_{1}}\left(\frac{\Lambda^{(2)}_{1}}{\nu_{1}}+\frac{\Lambda^{(2)}_{2}}{\nu_{2}}\right)<\frac{\tau_{1}\tau_{2}}{\tau_{1}\tau_{2}+\gamma_{1}\tau_{2}+\gamma_{2}\tau_{1}},
\end{array}\label{sta1}
\end{equation}
where $\Lambda^{(k)}_{1}=\frac{\lambda_{1}^{(k)}+\lambda_{2}^{(k)}r_{21}}{1-r_{12}r_{21}}$, $\Lambda^{(k)}_{2}=\frac{\lambda_{2}^{(k)}+\lambda_{1}^{(k)}r_{12}}{1-r_{12}r_{21}}$, $k=0,1,2$. Note that \eqref{sta1} has a clear probabilistic interpretation, since the left hand side in (\ref{sta1}) equals the amount of work brought into the system per time unit, and in order the system to be stable, should be less than the amount of work departing the system per time unit, provided that the network is in the operating mode. 
\subsection{Numerical validation \& Influence of system parameters as $w\to0$}\label{z1}
Figure \ref{fig:7} depicts the approximations (\ref{pert}) as a function of $w$ for increasing values of $M$, under the set-up given in Table \ref{tab1}. The horizontal line (i.e., $M = 0$) refers to the special case where the second queue has priority over the first one. As expected, Figure \ref{fig:7} confirms that the PSA approximations are accurate when $w$ is close to 0, and clearly, by adding more terms, we can have larger regions for $w$ where the accuracy is good. 
\begin{table}[ht!]
\centering
\caption{Set-up.}
\label{tab1}
	{\large\begin{tabular}{|c|c|}
		\hline
		Parameters &Values\\
	\hline\hline
		$(\lambda^{(1)}_{1},\lambda^{(1)}_{2})$&$(0.5,0.6)$\\
		\hline
		$(\lambda^{(2)}_{1},\lambda^{(2)}_{2})$&$(0.1,0.2)$\\
		\hline
		$(\nu_{1},\nu_{2})$&$(5,6)$\\
		\hline
		$(\tau_{1},\tau_{2})$&$(5,8)$\\
		\hline
		$(\gamma_{1},\gamma_{2})$&$(0.5,0.8)$\\
		\hline
		$(r_{12},r_{21})$&$(0.3,0.2)$\\
		\hline
	\end{tabular}}	
\end{table}
 
\begin{figure*}
  \includegraphics[width=0.75\textwidth]{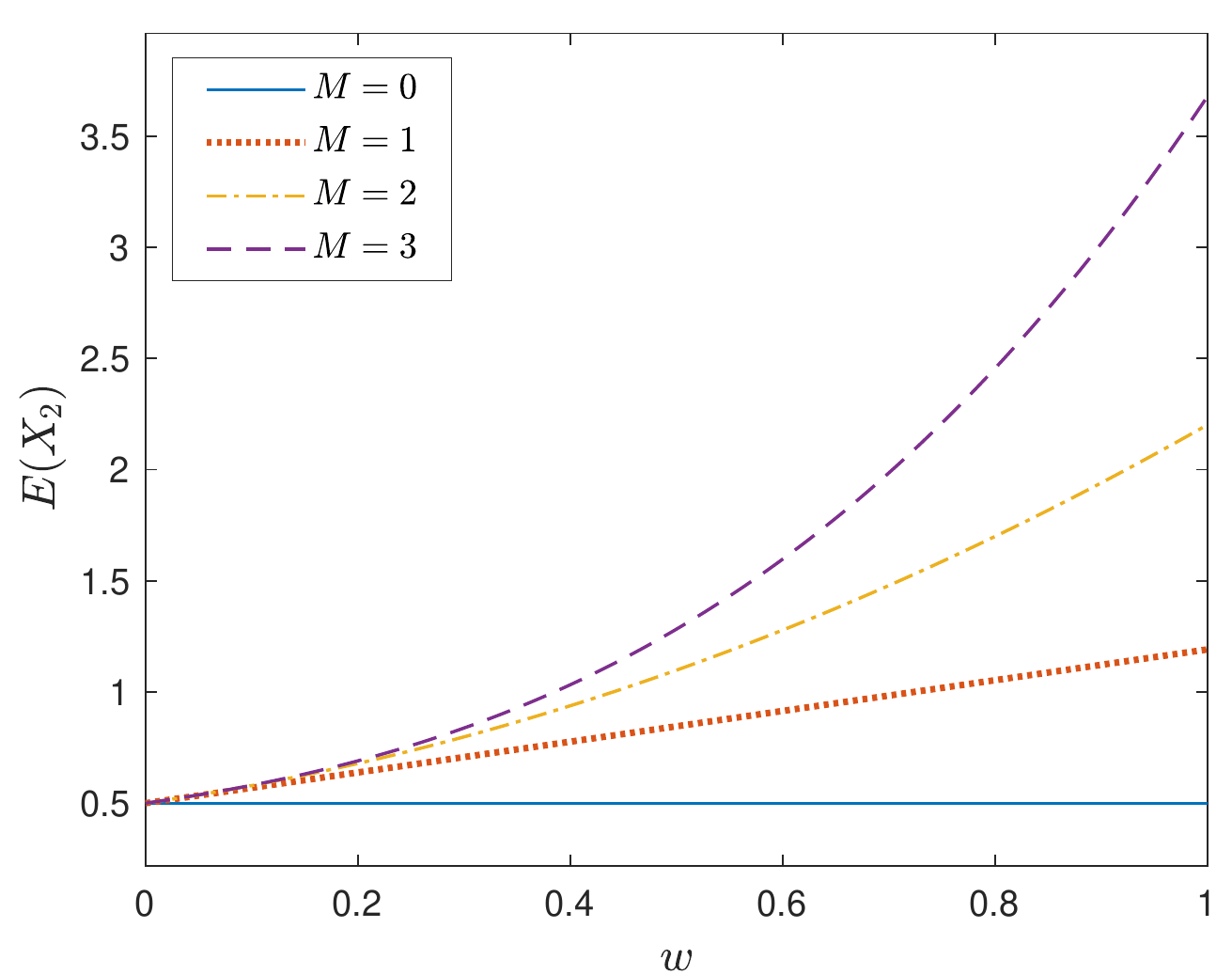}
\caption{Truncation approximation for $(\lambda^{(0)}_{1},\lambda^{(0)}_{2})=(1,0.8)$, $(\gamma_{1},\gamma_{2})=(0.5,0.8)$.}
\label{fig:7}       
\end{figure*}

In Figure \ref{fig:71}, we focus on the influence of system parameters on $E(X_{2})$ for the near priority system as $w\to0$. It is seen that as $\gamma_{2}$ increases the mean queue content in station 2 increases, and that increase becomes more apparent as $\lambda^{(0)}_{1}$, $\lambda^{(0)}_{2}$ increase too. This is expected since by increasing the failure rate and the load in system, the station 1 has always customers. Thus, sharing the server with that
station, even for a small percentage of the time can have a large influence, especially when the rate of failures increases too. Similar behaviour is expected if we fix $\gamma_{2}$, and let $\gamma_{1}$ to vary. 
\begin{figure*}
  \includegraphics[width=0.75\textwidth]{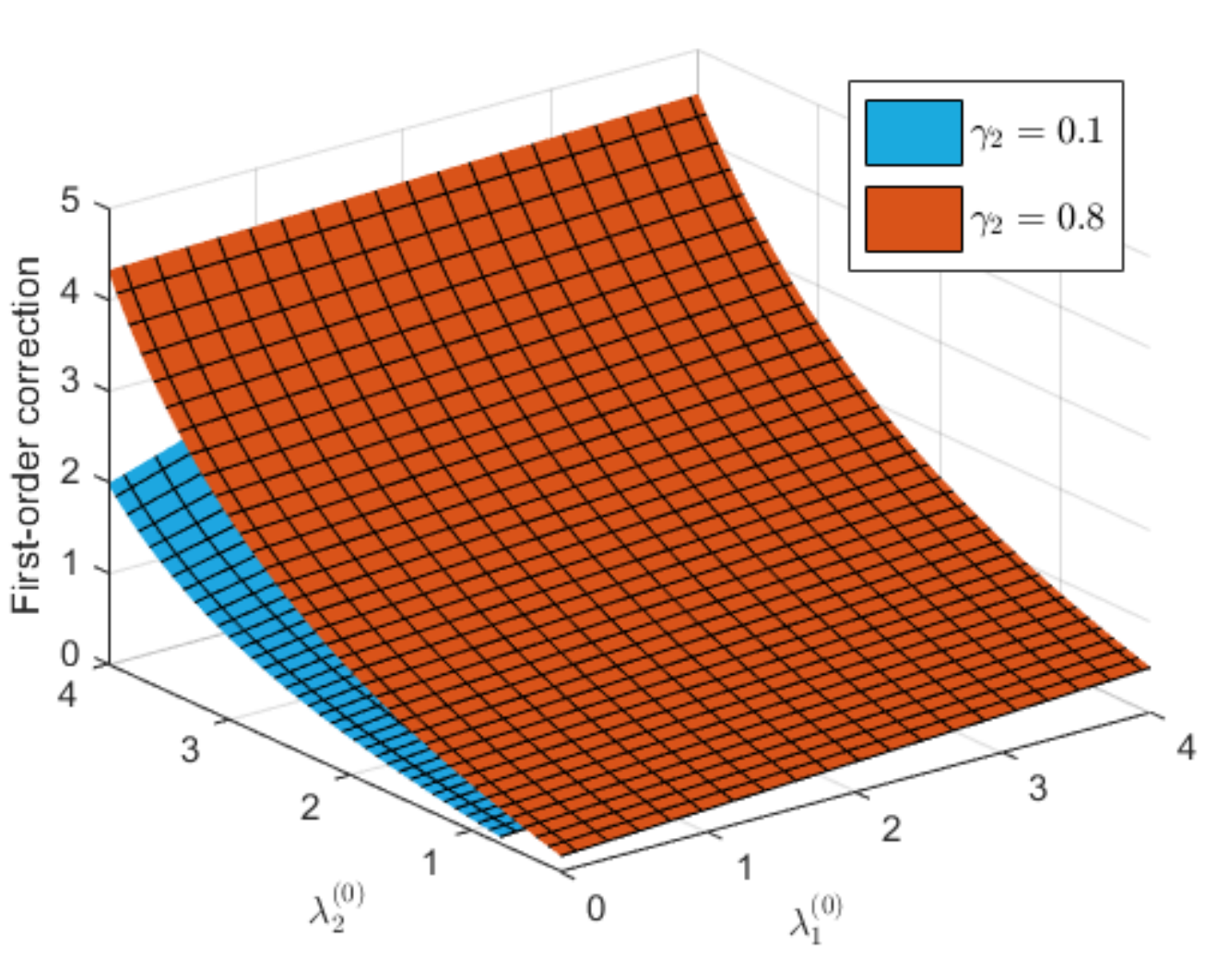}
\caption{First-order correction as $w\to 0$, $\gamma_{1}=0.5$.}
\label{fig:71}       
\end{figure*}
\subsection{The symmetrical model}\label{z2}
We now focus on the system analysed in Section \ref{symm}, and set $\lambda_{1}^{(0)}=\lambda_{2}^{(0)}:=\lambda$, $\gamma_{1}=\gamma_{2}:=\gamma$ and $w=1/2$. The rest of the parameter values are the same as those considered in subsection \ref{z1}, i.e., we have the following set-up:
\begin{table}[ht!]
\centering
\caption{Set-up.}
\label{tab2}
	{\large\begin{tabular}{|c|c|}
		\hline
		Parameters &Values\\
	\hline\hline
	$(\lambda^{(1)}_{1},\lambda^{(1)}_{2})$&$(0.5,0.6)$\\
		\hline
		$(\lambda^{(2)}_{1},\lambda^{(2)}_{2})$&$(0.1,0.2)$\\
		\hline
		$r_{12}=r_{21}=r$&$0.3$\\
		\hline
	\end{tabular}}	
\end{table}

Figure \ref{fig:81} depicts the total number of jobs in the system, i.e., $E(X_{1}+X_{2})$, as a function of $\lambda$ and $\gamma$. We observe sharp increase in $E(X_{1}+X_{2})$ when $\gamma$ take small values for increasing $\lambda$. This is expected since by increasing $\gamma$, the system is most of the time in failed modes, and since the parameters are considered fixed in these modes, even if we increase $\lambda$ (i.e., the arrival rate when the network is in operational mode), it will not cause significant effect. However, when $\gamma$ takes small values, the network remains in operational mode more time, and thus, the effect of $\lambda$ is crucial (as well as of service rate $\nu$). 

Similar behaviour is observed in Figure \ref{fig:11}, where we observe the way the total expected number of jobs in the system varies as a function of $\tau_{1}$, $\tau_{2}$ for different values of $\gamma$. Note that $E(X_{1}+X_{2})$ is very sensitive when $\tau_{j}$, $j=1,2$ takes small values. This is expected since small values of $\tau_{j}$ result in longer time periods for the network to return to the operation mode.

Figure \ref{fig:10} shows the way $E(X_{1}+X_{2})$ varies for increasing values of $\gamma$ and $\lambda$, as we increase $\tau_{1}$ from 5 to 15. This means that we considerably reduce the recovery time when the network is in failed mode 1. We can see that $E(X_{1}+X_{2})$ is considerably reduced, which is expected since the network returns faster in the operational mode. 

In Figure \ref{fig:9} we provide a comparison between the explicit expression on the expected number of jobs in queue 2 (see the second in \eqref{sym0}), and the one derived using the PSA for $M=10$ and $M=20$. We observe that when $\lambda$ takes small values, PSA provides very accurate results. As $\lambda$ increases, PSA accuracy is improved by providing further terms.
\begin{figure*}
  \includegraphics[width=0.75\textwidth]{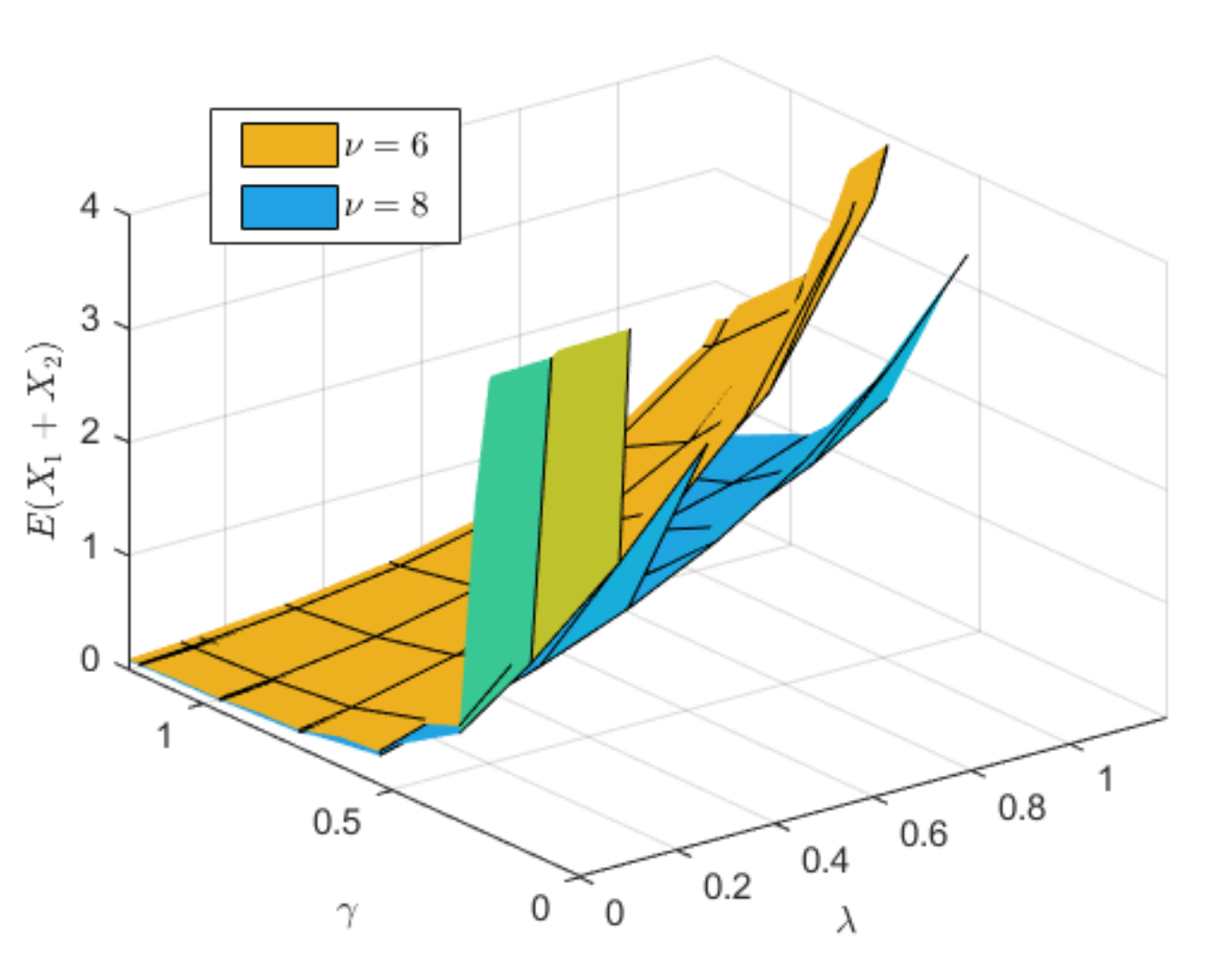}
\caption{$E(X_{1}+X_{2})$ for the symmetrical system for $(\tau_{1},\tau_{2})=(5,8)$.}
\label{fig:81}       
\end{figure*}\begin{figure*}
  \includegraphics[width=0.75\textwidth]{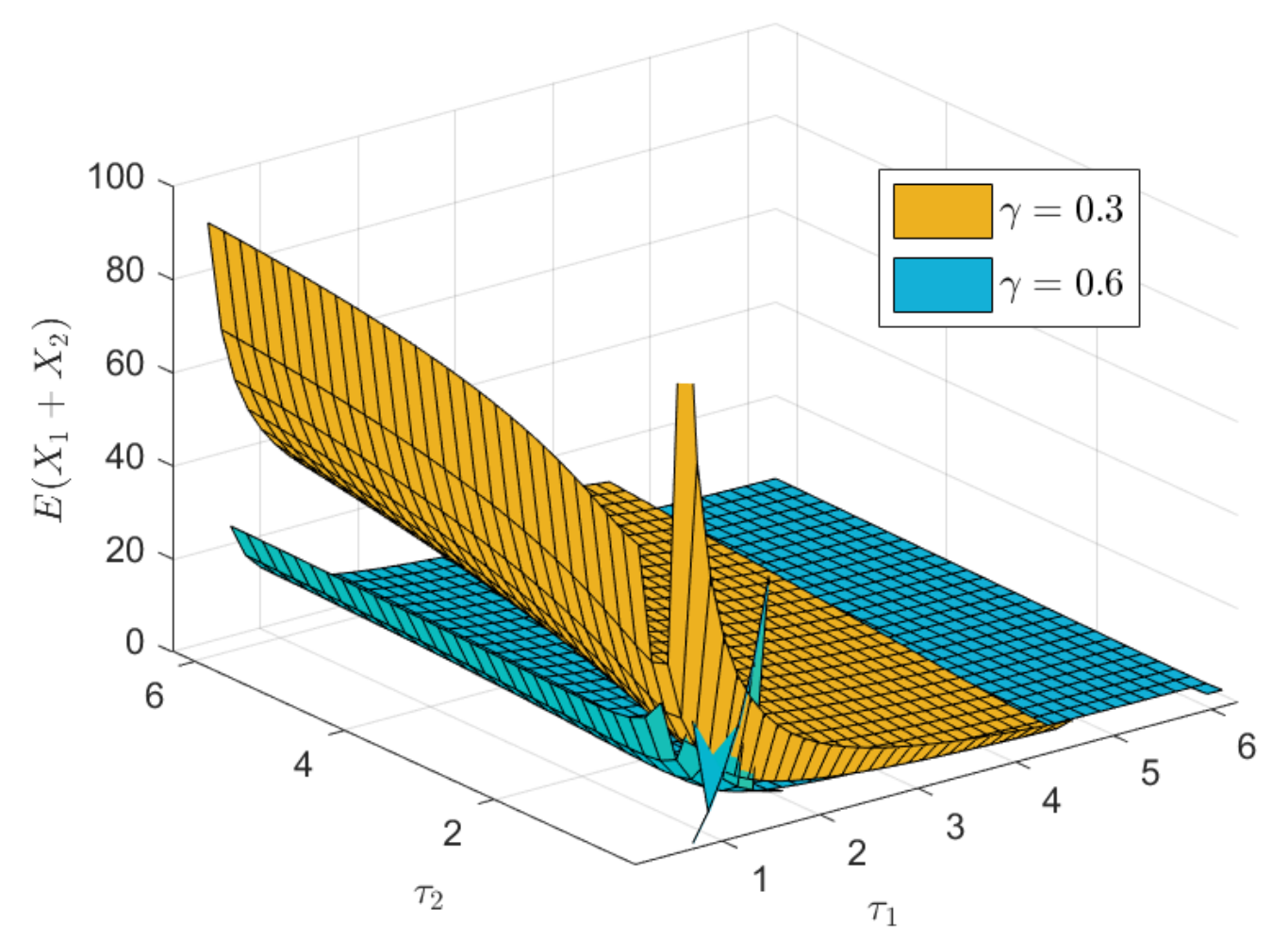}
\caption{$E(X_{1}+X_{2})$ for the symmetrical system and $\lambda=1.3$, $\nu=6$.}
\label{fig:11}       
\end{figure*}

\begin{figure*}
  \includegraphics[width=0.75\textwidth]{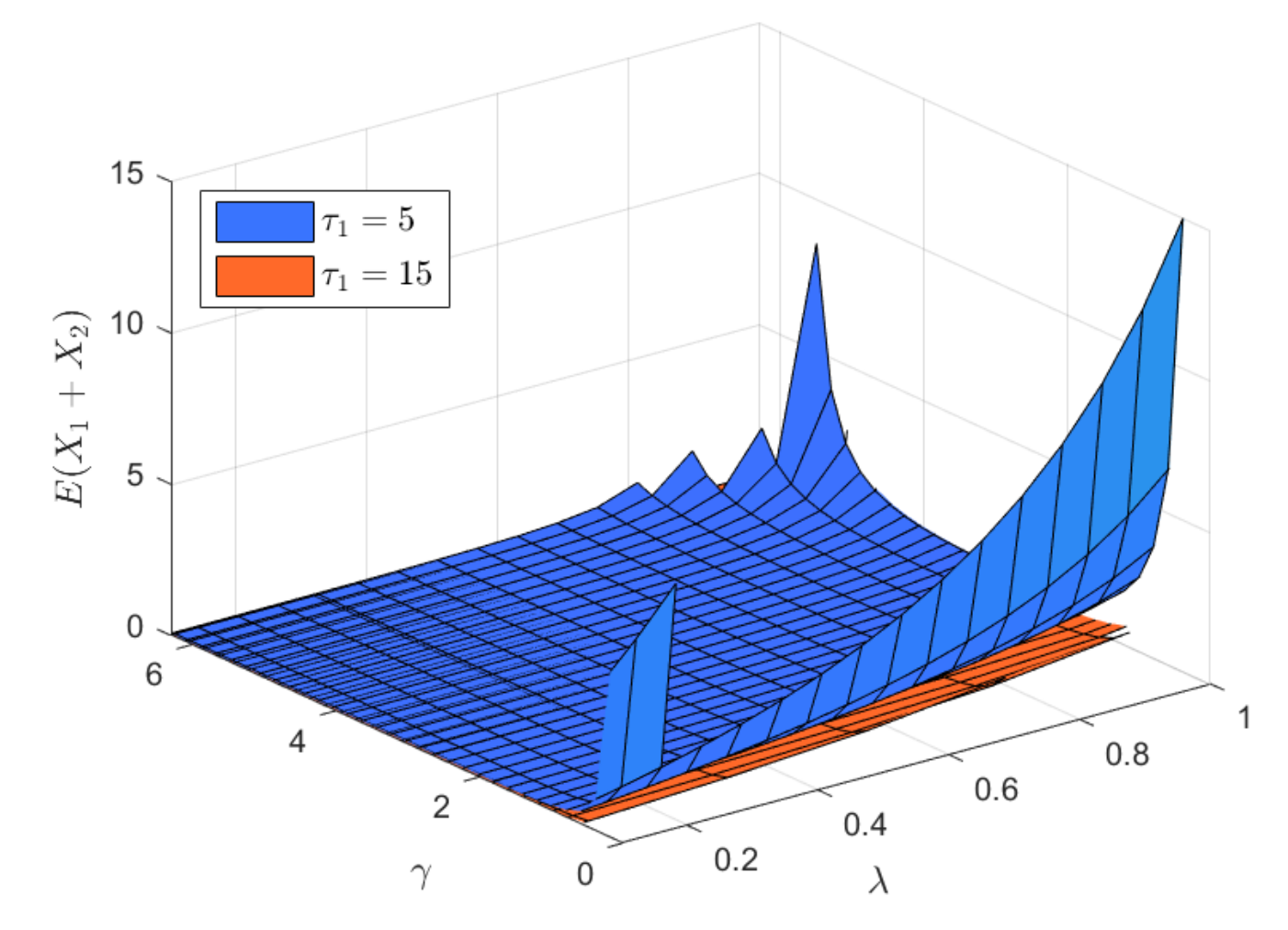}
\caption{$E(X_{1}+X_{2})$ for the symmetrical system for  $\tau_{2}=8$, $\nu=6$.}
\label{fig:10}       
\end{figure*}

\begin{figure*}
  \includegraphics[width=0.75\textwidth]{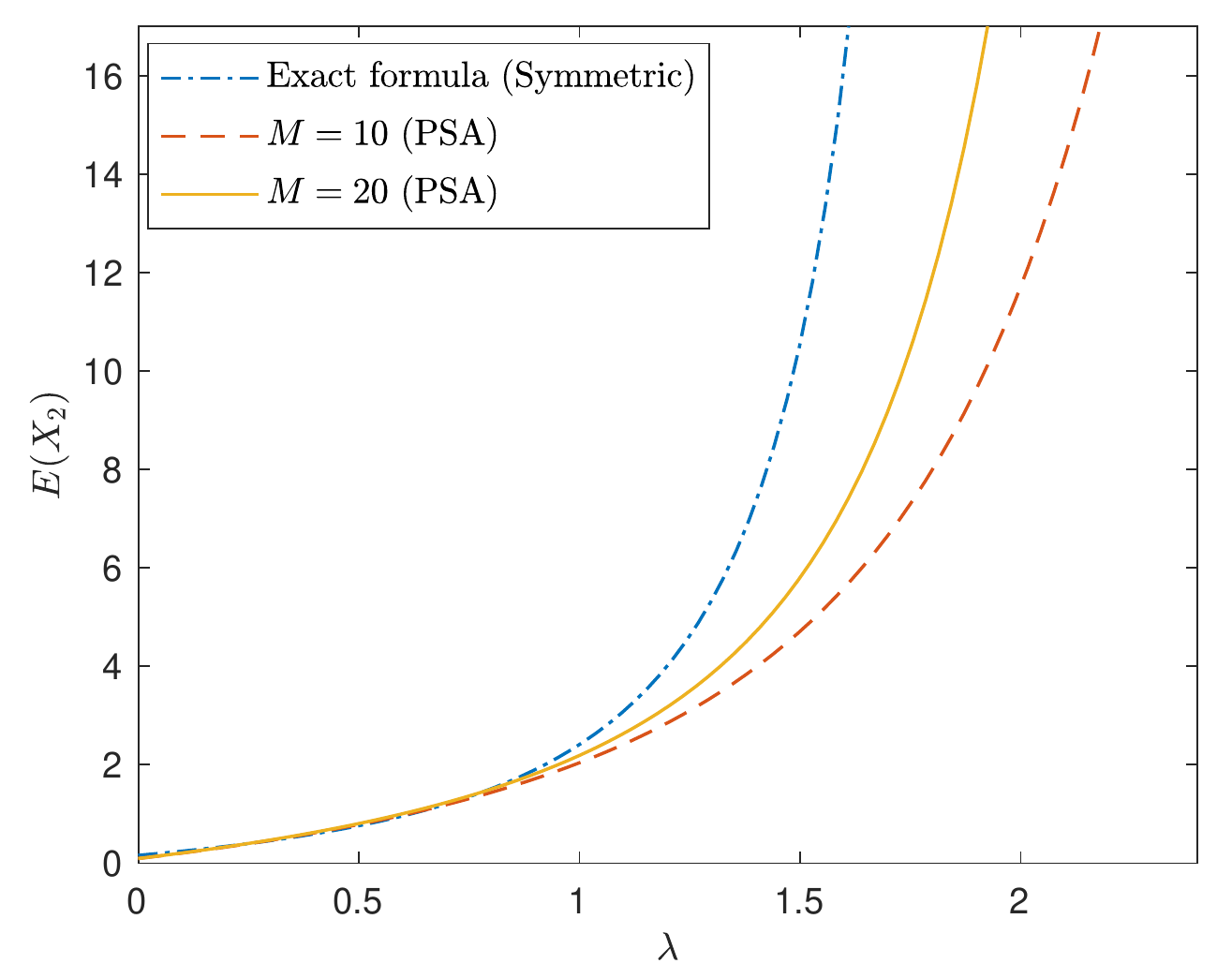}
\caption{Explicit vs PSA for $E(X_{2})$ for the symmetrical system and $(\tau_{1},\tau_{2})=(5,8)$.}
\label{fig:9}       
\end{figure*}

\section{Conclusion}\label{conc}
This work focused on the stationary analysis of a certain class of Markov-modulated reflected random walk with potential applications in the modelling of two-node queueing networks with coupled queues and service interruptions, and in priority retrial systems with coupled orbit queues. We presented three methodological approaches based on the generating function approach: $i)$ The power series approximation (PSA) method, under which we derived power series expansions of the pgfs of the stationary joint distributions for each state of the phase process, and reveal the flexibility of the PSA approach in even more complicated setting. $ii)$ The theory of Riemann boundary value problems. $iii)$ We also provided explicit expressions for the first moments of the stationary distribution when we assume the \textit{symmetrical assumption} at the phase $J(t)=0$. This result is obtained without solving a boundary value problem. 

Our developed technique is general enough to deal with the analysis of Markov-modulated coupled queueing systems. Examples are the standard, as well as the G-queueing networks with service interruptions and coupled queues, as well as priority retrial systems with two coupled orbit queues. Potential extensions of the PSA refer to the case of three (or more) queues. Unfortunately, the theory of boundary value problems
has not been developed for problems with more than two queues.

The overall conclusion is that PSA, the theory of boundary value problems as well as the corresponding analysis under the symmetry assumption seems to be adapted to any Markov-modulated two-dimensional processes for which the \textit{matrix} functional equation \eqref{matrix} can be transformed into a \textit{scalar} functional equation of the form given in \eqref{fe}.
\section*{Acknowledgements}
The author would like to thank the Editor and the anonymous Reviewers for the careful reading of the manuscript and the insightful remarks and input, which helped to improve the original exposition.

\appendix
\section{Appendix A}\label{app1}
\begin{theorem}
For every $|x|=1$, $x\neq 1$, $G(x,y)=0$ has a unique zero, say $Y(x)$, in the disc $|y|<1$.
\end{theorem}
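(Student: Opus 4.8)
The plan is to fix $x$ on the unit circle with $x\neq1$, regard $G(x,\cdot)$ as a function of $y$, and apply Rouch\'e's theorem on $|y|=1$. First I would record the reduction that makes the kernel transparent: one checks, by substituting the definitions of $R$ and $K$ and using the coupling $q_{-1,j}(0)=wq^{(1)}_{-1,j}(0)$, $q_{i,-1}(0)=(1-w)q^{(2)}_{i,-1}(0)$, that $R(x,y)T(x,y)-xy=xG(x,y)-wT(x,y)K(x,y)$. Setting $w=0$ gives $xG(x,y)=R_{0}(x,y)T(x,y)-xy$, where $R_{0}=R|_{w=0}$. Since $|x|=1$, the zeros of $G(x,\cdot)$ in $|y|<1$ coincide with those of $U_{0}(x,y):=xy-\psi_{0}(x,y)$, $\psi_{0}:=R_{0}T$, which is precisely the kernel of the genuine random walk obtained in the degenerate regime $w=0$.

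Before invoking Rouch\'e I would settle analyticity. The matrix $\mathbf{L}(x,y)$ is strictly diagonally dominant, hence $\det[\mathbf{L}(x,y)^{T}]\neq0$, throughout $|x|\le1$, $|y|\le1$ (the real-part estimate $\Re(1-x^{i}y^{j})\ge0$ used just after \eqref{a2} applies verbatim on the whole polydisc, not only on its distinguished boundary). Consequently each $F_{0,k}$, and therefore $1/T=\sum_{k}\theta_{k,0}A_{k,0}F_{0,k}$, is analytic in $|y|\le1$ for fixed $x$; clearing this single denominator $\det[\mathbf{L}(x,y)^{T}]$ turns $U_{0}$ into a function analytic in $|y|\le1$, to which Rouch\'e's theorem applies and whose zero count equals that of $G$.

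The core of the argument is the strict inequality $|\psi_{0}(x,y)|<|xy|=1$ for $|x|=1$, $x\neq1$, $|y|=1$. Here I would exploit the probabilistic meaning of $\psi_{0}$: writing $\mathbf{L}=\mathrm{diag}(D_{k})-B$ with $B\ge0$ collecting the nonnegative terms $\theta_{k,m}A_{k,m}$, the inverse $\mathbf{L}^{-1}$ expands as a Neumann/resolvent series with nonnegative power-series coefficients, so $1/T$ is a nonnegative-coefficient transform; combined with the phase-$0$ block $R_{0}$ this exhibits $\psi_{0}$ as the joint transform of a bona fide jump law of the reduced ($w=0$) chain, of total mass $\psi_{0}(1,1)=1$ (indeed $D_{0}(1,1)=\theta_{0,\cdot}=\sum_{k}\theta_{k,0}F_{0,k}(1,1)=1/T(1,1)$ by the balance relation of the phase process, so $\psi_{0}(1,1)=D_{0}(1,1)T(1,1)=1$). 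Then on the torus $|\psi_{0}(x,y)|\le\psi_{0}(1,1)=1$, with equality only when all monomials carrying positive mass share a common argument; because $x\neq1$ and the increment law is genuinely two-dimensional (aperiodic), this cannot occur, which gives the strict inequality — including at the delicate point $y=1$, where equality would otherwise hold and is broken precisely by $x\neq1$.

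With the strict inequality in hand, Rouch\'e's theorem shows $U_{0}(x,\cdot)$ has the same number of zeros in $|y|<1$ as $xy$, namely the single (simple) zero at $y=0$ since $x\neq0$; hence $G(x,\cdot)$ has a unique zero $Y(x)$ in $|y|<1$. The excluded value $x=1$ is exactly the degenerate case: there $\psi_{0}(1,1)=1=xy$ and the root migrates to the boundary, $Y(1)=1$. Analyticity of $Y$ then follows from the implicit function theorem and simplicity of the zero from $\partial_{y}U_{0}\neq0$ at the root, both granted by the strict inequality. The main obstacle is the modulus bound of the third step: rigorously certifying that $\psi_{0}=R_{0}T$ is a genuine (sub)stochastic transform (nonnegativity through the resolvent structure of $\mathbf{L}$) and carrying out the equality/aperiodicity analysis at $y=1$; should the transform interpretation prove awkward, an alternative is a direct winding-number computation of $U_{0}/(xy)$ along $|y|=1$.
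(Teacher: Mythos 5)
Your opening reduction is correct and appealing: the identity $R(x,y)T(x,y)-xy=xG(x,y)-wT(x,y)K(x,y)$ does hold, so for $|x|=1$ the zeros of $G(x,\cdot)$ in $|y|<1$ are exactly those of the degenerate ($w=0$) kernel $U_{0}=xy-\psi_{0}$, and your diagonal-dominance argument for analyticity matches the paper's. The proof fails, however, at its declared core: the inequality $|\psi_{0}(x,y)|<|xy|=1$ on $|x|=|y|=1$, $x\neq1$, is false in general, because $\psi_{0}=R_{0}T$ is \emph{not} a nonnegative-coefficient transform. Your Neumann-series argument correctly shows that $1/T=\sum_{k}\theta_{k,0}A_{k,0}F_{0,k}$ has nonnegative coefficients, but $T$ is its \emph{reciprocal}, and $R_{0}$ itself carries negative coefficients; a ratio of nonnegative transforms need not be one, so the maximum-at-$(1,1)$ principle does not apply to $\psi_{0}$. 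Concretely, take $N=1$, $A_{0,1}=A_{1,0}\equiv1$, $q_{i,j}(1)\equiv0$, $\theta_{0,1}=\theta_{1,0}=1$, and at phase $0$ only $q_{1,1}(0)=1$ and $q^{(2)}_{0,-1}(0)=1$ nonzero. Then $T\equiv1$ and $\psi_{0}(x,y)=3xy-x^{2}y^{2}-x$, so $\psi_{0}(-1,-1)=3>1=|xy|$, and the Rouch\'e comparison of $xy$ against $\psi_{0}$ on $|y|=1$ breaks down at $y=-1$ (the theorem's conclusion still holds there: $U_{0}(-1,y)=y^{2}+2y-1$ has the single root $\sqrt{2}-1$ in the disc). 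More generally $|\psi_{0}(-1,-1)|\geq 1+2q^{(2)}_{0,-1}(0)/\theta_{0,\cdot}>1$ whenever $q^{(2)}_{0,-1}(0)>0$, so the failure is generic, not a pathology. The correct comparison must be made after clearing $T$, i.e.\ between $xy\sum_{k}\theta_{k,0}A_{k,0}F_{0,k}$ and $R_{0}$, rewritten as $\tilde{c}\,xy-\tilde{\Psi}$ with $\tilde{\Psi}$ a genuinely nonnegative series of total mass $\tilde{c}$; and even then your equality analysis (``all monomials sharing a common argument cannot occur'') is an aperiodicity assertion that can fail at isolated points of $|y|=1$ and is only waved at, as you yourself concede.

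The paper's proof is organized precisely to avoid both difficulties. It writes $G=0$ as $G_{0}(x,y)=h(x,y)$ with $h=y/T$ and runs Rouch\'e twice: first on $G_{0}=f-g$, where $f$ is $y$ times the \emph{total} exit rate (which includes $\theta_{0,\cdot}$) and $g$ is dominated in modulus by that rate minus $\theta_{0,\cdot}$, so strictness follows from $\theta_{0,\cdot}>0$ alone, with no aperiodicity discussion; second, between $G_{0}$ and $h$ via the sandwich $|G_{0}|>\theta_{0,\cdot}>|h|$, which again uses only the positive slack $\theta_{0,\cdot}$. To salvage your route you would need to replace the bound on $\psi_{0}$ by a bound on the nonnegative series obtained after multiplying by $1/T$, and then either exploit the same slack $\theta_{0,\cdot}>0$ or carry out the boundary equality analysis in full; as written, the central inequality of your third step is simply not true.
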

\begin{proof}
In order to enhance the readability, we consider the case where $\theta_{i,j}=0$, $i,j=1,\ldots,N$, so that $T(x,y)$ is given in \eqref{edd} (Subsection \ref{subse}). Analogous arguments can be applied even in the general case, where $\theta_{i,j}>0$, $i,j=1,\ldots,N$.

The proof is based on the application of Rouch\'e's theorem \cite{titc,adanrouche}. 
Note that $G(x,y)=0$ is rewritten as $G_{0}(x,y)=h(x,y):=y\sum_{j=1}^{N}\frac{\theta_{j,0}\theta_{0,j}A_{0,j}(x,y)A_{j,0}(x,y)}{D_{j}(x,y)}$, where
\begin{displaymath}
\begin{array}{rl}
G_{0}(x,y)=&yD_{0}(x,y)+\sum_{i=0}^{\infty}q^{(2)}_{i,-1}(y-x^{i}).
\end{array}
\end{displaymath}

It is easy to see that $G_{0}(x,y)=f(x,y)-g(x,y)$, with
\begin{displaymath}
\begin{array}{rl}
f(x,y)=&y[\theta_{0,.}+\sum_{i=1}^{\infty}q_{i,0}(0)(1-x^{i})+\sum_{i=1}^{\infty}\sum_{j=1}^{\infty}q_{i,j}(0)+\sum_{j=1}^{\infty}q_{0,j}(0)+\sum_{i=0}^{\infty}q_{i,-1}(0)],\\
g(x,y)=&\sum_{i=1}^{\infty}\sum_{j=1}^{\infty}q_{i,j}(0)x^{i}y^{j}+\sum_{j=1}^{\infty}q_{0,j}(0)y^{j}+\sum_{i=0}^{\infty}q_{i,-1}(0)x^{i}.
\end{array}
\end{displaymath}
%
We first show that $G_{0}(x,y)=0$ has a unique root in $|y|<1$, for $|x|=1$, $x\neq1$. Then, for $|x|=1$, $x\neq1$,
\begin{displaymath}
\begin{array}{rl}
|f(x,y)|&=|y||\theta_{0,.}+\sum_{i=1}^{\infty}q_{i,0}(0)(1-x^{i})+\sum_{i=1}^{\infty}\sum_{j=1}^{\infty}q_{i,j}(0)+\sum_{j=1}^{\infty}q_{0,j}(0)+\sum_{i=0}^{\infty}q_{i,-1}(0)|\\
&\geq |y|[\theta_{0,.}+\sum_{i=1}^{\infty}\sum_{j=1}^{\infty}q_{i,j}(0)+\sum_{j=1}^{\infty}q_{0,j}(0)+\sum_{i=0}^{\infty}q_{i,-1}(0)]\\
&>|y|[\sum_{i=1}^{\infty}\sum_{j=1}^{\infty}q_{i,j}(0)+\sum_{j=1}^{\infty}q_{0,j}(0)+\sum_{i=0}^{\infty}q_{i,-1}(0)],\vspace{2mm}\\
|g(x,y)|&\leq\sum_{i=1}^{\infty}\sum_{j=1}^{\infty}q_{i,j}(0)|x|^{i}|y|^{j}+\sum_{j=1}^{\infty}q_{0,j}(0)|y|^{j}+\sum_{i=0}^{\infty}q_{i,-1}(0)|x|^{i}\\&=\sum_{i=1}^{\infty}\sum_{j=1}^{\infty}q_{i,j}(0)|y|^{j}+\sum_{j=1}^{\infty}q_{0,j}(0)|y|^{j}+\sum_{i=0}^{\infty}q_{i,-1}(0)
\end{array}
\end{displaymath}
Then, for all $y$ such that $|y|=1$ have that
\begin{displaymath}
|g(x,y)|\leq\sum_{i=1}^{\infty}\sum_{j=1}^{\infty}q_{i,j}(0)+\sum_{j=1}^{\infty}q_{0,j}(0)+\sum_{i=0}^{\infty}q_{i,-1}(0)<|f(x,y)|,\,|y|=1,|x|=1,x\neq 1,
\end{displaymath}
which implies by Rouch\'{e}'s theorem, see, e.g. \cite{titc}, that $G_{0}(x,y)$ has as many zeros, counted according to their multiplicity,
inside $|y| = 1$ as $f(x, y)$. Since $f(x, y)$ has only one zero of multiplicity
1 at $y = 0$, yields that for every $x$ with $|x| = 1$, $x\neq 1$, $G_{0}(x,y) = 0$
has one root inside $|y| = 1$.

Now, note that
\begin{displaymath}
|G_{0}(x,y)|=|f(x,y)-g(x,y)|\geq||f(x,y)|-|g(x,y)||>\theta_{0,.}.
\end{displaymath}
Moreover,
\begin{displaymath}
\begin{array}{rl}
|-h(x,y)|=&|-y\sum_{j=1}^{N}\frac{\theta_{j,0}\theta_{0,j}A_{0,j}(x,y)A_{j,0}(x,y)}{D_{j}(x,y)}|\leq|y|\sum_{j=1}^{N}\frac{\theta_{j,0}\theta_{0,j}|A_{0,j}(x,y)||A_{j,0}(x,y)|}{|D_{j}(x,y)|}\\<&\sum_{j=1}^{N}\frac{\theta_{j,0}\theta_{0,j}}{\theta_{j,0}}=\theta_{0,.},
\end{array}
\end{displaymath}
since $|A_{0,j}(x,y)|=1$, $|A_{j,0}(x,y)|=1$, $|D_{j}(x,y)|\geq\theta_{j,0}$, for $|x|=1$, $|y|=1$, $x\neq1$. Thus,
\begin{displaymath}
|-h(x,y)|<\theta_{0,.}<|G_{0}(x,y)|,\,|x|=1, |y|=1, x\neq1.
\end{displaymath}
Therefore, Rouch\'e's theorem implies that $G_{0}(x,y)=y\sum_{j=1}^{N}\frac{\theta_{j,0}\theta_{0,j}}{D_{j}(x,y)}$, i.e., $G(x,y)=0$ has the same number of zeros for every $x$ with $|x| = 1$, $x\neq 1$, inside $|y|=1$, as $G_{0}(x,y)$, which we shown that has exactly one. Thus, denote this zero as $y=Y(x)$, $|x| = 1$, $x\neq 1$, with $|Y(x)|<1$.
\end{proof}
\section{Appendix B}\label{app2}
\paragraph{Proof of Theorem \ref{th1}:} To enhance the readability, we consider the case where $\theta_{i,j}=0$, $i,j=1,\ldots,N$, and considered the QBD version of $\{Z(t);t\geq0\}$, i.e, $\mathbb{H}=\{-1,0,1\}$, $\mathbb{H}^{+}=\{0,1\}$. Analogous arguments can be applied even in the general case. The proof is based on the application of Rouch\'e's theorem \cite{titc,adanrouche}. 

Note that $U(gs,gs^{-1})=0$ is written as $R(gs,gs^{-1})=g^{2}\sum_{j=1}^{N}\frac{\theta_{j,0}\theta_{0,j}}{D_{j}(gs,gs^{-1})}$. We first show that $R(gs,gs^{-1})=0$ has a single root in $|g|\leq 1$, $|s|=1$. Indeed, $R(gs,gs^{-1}=0)$ is rewritten as
\begin{displaymath}
\begin{array}{rl}
g^{2}=L(gs,gs^{-1}):=\frac{q_{-1,0}(0)gs^{-1}+q_{0,-1}(0)gs}{D_{0}(gs,gs^{-1})+q_{-1,0}(0)+q_{0,-1}(0)+q_{-1,1}(0)(1-s^{-2})+q_{1,-1}(0)(1-s^{2})}.
\end{array}
\end{displaymath}
Note that the denominator of $L(gs,gs^{-1})$ never vanishes for $|g|\leq 1$, $|s|=1$ and in particular,
\begin{displaymath}
\begin{array}{c}
|D_{0}(gs,gs^{-1})+q_{-1,0}(0)+q_{0,-1}(0)+q_{-1,1}(0)(1-s^{-2})+q_{1,-1}(0)(1-s^{2})|\\
>\theta_{0,.}+q_{-1,0}(0)+q_{0,-1}(0).
\end{array}
\end{displaymath}
Therefore,
\begin{displaymath}
\begin{array}{rl}
|L(gs,gs^{-1})|\leq&\frac{q_{-1,0}(0)|g|+q_{0,-1}(0)|g|}{|D_{0}(gs,gs^{-1})+q_{-1,0}(0)+q_{0,-1}(0)+q_{-1,1}(0)(1-s^{-2})+q_{1,-1}(0)(1-s^{2})|}\\
<&\frac{q_{-1,0}(0)+q_{0,-1}(0)}{\theta_{0,.}+q_{-1,0}(0)+q_{0,-1}(0)}<1=|g|^{2}.
\end{array}
\end{displaymath}
Thus, by applying Rouch\'e's theorem $R(gs,gs^{-1})=0$ has a single zero in $|g|<1$, $|s|=1$. Note also that
\begin{displaymath}
\begin{array}{r}
|R(gs,gs^{-1})|\geq|g|^{2}(\theta_{0.}+q_{1,0}(1-gs)+q_{0,1}(1-gs^{-1}+q_{1,1}(0))(1-|g|^{2}))>\theta_{0,.}.
\end{array}
\end{displaymath}
On the other hand,
\begin{displaymath}
\begin{array}{l}
|g^{2}\sum_{j=1}^{N}\frac{\theta_{j,0}\theta_{0,j}A_{0,j}(gs,gs^{-1})A_{j,0}(gs,gs^{-1})}{D_{j}(gs,gs^{-1})}|\leq\sum_{j=1}^{N}\frac{\theta_{j,0}\theta_{0,j}}{|D_{j}(gs,gs^{-1})|}<\theta_{0,.}<|R(gs,gs^{-1})|.
\end{array}
\end{displaymath}
Therefore, since $R(gs,gs^{-1})$ has a single zero in $|g|<1$, $|s=1|$, Rouch\'e's theorem states that $R(gs,gs^{-1})=g^{2}\sum_{j=1}^{N}\frac{\theta_{j,0}\theta_{0,j}A_{0,j}(gs,gs^{-1})A_{j,0}(gs,gs^{-1})}{D_{j}(gs,gs^{-1})}$ has a single root in $|g|<1$, $|s=1|$. Equivalently, $U(gs,gs^{-1})=0$ has a a single root in $|g|<1$, $|s=1|$.

\bibliographystyle{spmpsci}      


%
%

\end{document}